\theoremstyle{plain}
\newtheorem{thm}{Theorem}[section]
\newtheorem{lem}[thm]{Lemma}
\newtheorem{prop}[thm]{Proposition}
\newtheorem{conj}[thm]{Conjecture}
\newtheorem{definition}[thm]{Definition}
\newtheorem*{claim*}{Claim}
\newtheorem*{thm*}{Theorem}
\def\R{\mathbb{R}}
\def\H{\mathcal{H}}
\def\dH{\dim_{\mathcal{H}}}
\newcommand{\Span}{\operatorname{Span}}
\begin{document}
\title{On unions of geodesics and projections of invariant sets}
\date{}
\author{Longhui Li}
\begin{abstract}
    Let $M$ be a $d$-dimensional complete Riemannian manifold and let $\pi: SM \to M$ denote the canonical projection from the unit tangent bundle. We prove that if $E \subset SM$ is a set that invariant under the geodesic flow with Hausdorff dimension $\dH E \ge 2(k-1)+1 +\beta$ for some integer $1 \le k \le d-1$ and some $\beta \in [0,1]$, then the projection $\pi(E)$ satisfies $\dim_\H \pi(E) \ge k + \beta$. In other words, this yields a lower bound on the Hausdorff dimension of unions of geodesics in $M$. Our theorem extends a result of J. Zahl concerning unions of lines in $\mathbb{R}^d$. The proof relies on the transversal property of geodesics, an appropriate $(k+1)$-linear curved Kakeya estimate, and the Bourgain–Guth argument.

\end{abstract}
\maketitle

\section{Introduction}
    \subsection{Unions of affine $n$-planes}
    Let $A(d,n)$ denote the space of affine $n$-planes in $\R^d$. Each affine $n$-plane can be uniquely written as $V+a$ where $V$ is a $n$-dimensional subspace and $a\in V^{\bot}$. We equip $A(d,n)$ with the metric
    \begin{align*}
        d(V+a,W+b)=\|\pi_V-\pi_W\|+|a-b|,
    \end{align*}
    where $\|\pi_V-\pi_W\|$ denote the operator norm of $\pi_V-\pi_W$.
    
    D. Oberlin \cite{DMObe14} showed that if $A$ is a Borel set with $\dH\mathcal{A}=\alpha>(n+1)(d-n)-n$, the set $\bigcup_{V\in\mathcal{A}}V$ has positive Lebesgue measure in $\mathbb{R}^d$. This result is sharp for general families of $n$-planes $\mathcal{A}$. 
    D. Oberlin also posed the following conjecture on the Hausdorff dimension of unions of affine lines.
    \begin{conj}\label{conjecture, unions of lines}
        Suppose $1\leq k\leq d-1$ is an integer and let $\beta\in[0,1]$. If $\mathcal{A}\subset A(d,1)$ with $\dH\mathcal{A}\geq 2(k-1)+\beta$, then 
        \begin{align*}
            \dH\bigcup_{l\in\mathcal{A}}l\geq k+\beta.
        \end{align*}
    \end{conj}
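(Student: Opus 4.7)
My plan is to follow the broad/narrow dichotomy of Bourgain--Guth, with a $(k+1)$-linear Kakeya estimate as the main quantitative ingredient, in the spirit of the Euclidean argument of Zahl that the paper sets out to extend. The argument runs by induction on $k$, with $k=1$ (essentially Marstrand's projection theorem, or Bourgain's discretized projection theorem) as the base case.

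\textbf{Discretization.} By the standard $\delta$-discretization of Hausdorff dimension, it suffices to prove that for every $\epsilon>0$ there is $\delta_0>0$ such that for $\delta<\delta_0$, whenever $\mathbb{T}$ is a $\delta$-separated collection of unit $\delta$-tubes arising from lines in $\mathcal{A}$ with $|\mathbb{T}|\gtrsim \delta^{-(2(k-1)+\beta)}$, one has
\begin{equation*}
\Bigl|\bigcup_{T\in\mathbb{T}} T\Bigr| \gtrsim \delta^{d-k-\beta+\epsilon}.
\end{equation*}
The main analytic input I would use is a $(k+1)$-linear Kakeya estimate of Guth type: for all $\epsilon>0$ there is $C_{\epsilon,\nu}$ such that whenever $\mathbb{T}_1,\ldots,\mathbb{T}_{k+1}$ are collections of $\delta$-tubes with $\nu$-transverse directions in the $(k+1)$-linear sense (any $(k+1)$-tuple of directions, one picked from each family, spans a parallelepiped of $(k+1)$-volume at least $\nu$),
\begin{equation*}
\int_{B(0,1)}\prod_{j=1}^{k+1}\Bigl(\sum_{T\in\mathbb{T}_j}\chi_T(x)\Bigr)^{\tfrac{1}{k}} dx \;\leq\; C_{\epsilon,\nu}\,\delta^{-\epsilon}\prod_{j=1}^{k+1}\bigl(\delta^{d-1}|\mathbb{T}_j|\bigr)^{\tfrac{1}{k}}.
\end{equation*}
This is the $L^{1/k}$ endpoint of the $(k+1)$-linear Kakeya inequality, accessible via the polynomial method.

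\textbf{Broad/narrow decomposition.} Fix a large parameter $K$ and cover the unit ball by balls $B$ of radius $K^{-1}$. On each $B$, examine the directions of the tubes in $\mathbb{T}$ that pass through $B$. Either \emph{(broad)} there are $k+1$ caps on $S^{d-1}$, pairwise $\nu$-transverse, each containing a constant fraction of these directions, in which case the multilinear estimate above, after rescaling $B$ to unit scale and accordingly thickening, gives a good lower bound on $|\bigcup T \cap B|$; or \emph{(narrow)} all the relevant directions concentrate in the $K^{-1}$-neighborhood of some $k$-dimensional subspace $V_B$. In the narrow case, a pigeonhole selects a dominant $V_B$, and projecting to a complementary $(d-k)$-plane reduces the configuration to a version of the same statement at parameter $k-1$ in a lower-dimensional ambient, which the inductive hypothesis handles. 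Summing the broad and narrow contributions and absorbing a $K^{O(\epsilon)}$ loss per scale closes the bound, provided $K$ is chosen appropriately small relative to $\epsilon$ but large enough for the dichotomy to be effective.

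\textbf{Main obstacle.} I expect the hardest step to be the narrow case reduction: after pigeonholing, one must verify that the surviving subfamily of lines still satisfies the dimensional hypothesis $2(k-2)+\beta$ needed to apply the induction. This requires a careful analysis of how $\dH$ on $A(d,1)$ behaves under the ``restrict directions to a $k$-plane and project'' operation, ensuring that the dimension budget lost by restricting to a thin slab of directions is exactly compensated by dropping one degree in the induction. A secondary technical point is coordinating the transversality threshold $\nu$, the scale $K$, and the $\epsilon$-loss across scales so that the iterative argument does not accumulate a prohibitive error.
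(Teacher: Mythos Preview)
Your high-level architecture matches the paper's (and Zahl's): discretize to a Kakeya-type statement about $\delta$-tubes, run the Bourgain--Guth broad/narrow dichotomy, and feed the broad case into the $(k+1)$-linear Kakeya inequality. The paper proves the statement as the $M=\R^d$ case of its Theorem~1.2, via the curved Kakeya estimate Theorem~4.3, and the broad part there is exactly what you describe.

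The genuine gap is in your narrow case. You propose \emph{induction on $k$}: when the directions cluster near a $k$-plane $V_B$, project to a complementary $(d-k)$-plane and invoke the statement at parameter $k-1$. This does not work as written. Projecting to $V_B^\perp$ collapses each line (whose direction lies near $V_B$) to a point, so there is no line configuration left on which to run the $k-1$ case. If instead you try to slice by translates of $V_B$ and view each slice as a family of lines in $\R^k$, the dimension budget does not close: the fibers of $\mathcal A$ over $V_B^\perp$ can have dimension as large as $2(k-1)$, which is already the full dimension of $A(k,1)$, so the $k-1$ case in $\R^k$ only yields $\dim_{\mathcal H}\bigcup l \ge k$ inside each slice, and there is no mechanism forcing a $\beta$-dimensional family of slices to contribute the missing $\beta$ (the slice dimension and the transverse dimension need not add). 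The ``$2(k-2)+\beta$ hypothesis'' you flag is not merely hard to verify; it is not in general true for the projected family.

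What the paper does instead is \emph{induction on scales with the same $k$}. In the narrow case the directions lie in $\mathcal N_\rho(H)$ for some $H\in G(d,k)$, and one covers $S^{d-1}\cap \mathcal N_\rho(H)$ by $\lesssim \rho^{1-k}$ many $\rho$-caps $\tilde\tau$. H\"older over these caps produces the factor $\rho^{(1-k)(p-1)}$ with $p'=k+\beta$. For each fixed cap the tubes are nearly parallel; a parabolic rescaling turns the $\delta$-tubes into $(\delta/\rho)$-tubes (in the paper's curved setting, $(\delta/\rho^2)$-tubes) that still satisfy a $(\delta/\rho,\,2(k-1)+\beta)$ non-concentration condition, and one applies the inductive hypothesis at that coarser scale. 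The H\"older factor $\rho^{(1-k)(p-1)}$ exactly cancels the scale change in $\delta^{(1-k)/p'}$, leaving a gain $\rho^{p\epsilon}$ that beats the $\log$ losses once $\rho$ is chosen small. This is the mechanism that closes the loop; your projection-and-drop-$k$ step should be replaced by it.
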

    The conjecture is best possible if one takes the set of all lines contained in a $\beta$-dimensional union of parallel affine $k$-planes. R. Oberlin \cite{RObe16} proved the finite field version of this conjecture. Héra, Keleti, and Máthé \cite{HKM19} proved this conjecture for $k=1$, and D. Oberlin \cite{DMObe14} proved this conjecture for $k=d-1$.
    Finally, Zahl \cite{Zah23} proved this conjecture for the remaining values of $k$ using the multilinear Kakeya theorem and the Bourgan-Guth argument.

    The conjecture where affine lines are replaced by affine $n$-planes was posed by Héra \cite{Hera19} and proved by Gan \cite{Gan23union} using the Bramscamp-Lieb inequality.
    \subsection{Unions of geodesics and projections of sets invariant under geodesic flows}
     The natural analogues of affine $n$-planes are  $n$-dimensional totally geodesic submanifolds. Any Riemannian manifold has many $1$-dimensional totally geodesic submanifolds, i.e, geodesics, but not all Riemannian manifolds have $n$-dimensional totally geodesic submanifolds when $n\geq 2$. In this article, we focus on unions of geodesics on complete Riemannian manifolds.

    The first problem is determining the proper measurement of a collection of geodesics. Suppose $M$ is a complete Riemannian manifold and $\Gamma=\{\gamma_i\}_{i\in I}$ is a collection of geodesics. Let $SM$ denote the unit tangent bundle of $M$ and $\pi$ denote the canonical projection from $SM$ to $M$. Suppose $\gamma_i(t)$ is a parametrization of $\gamma_i$ by arc length. We consider the set
    \begin{align}
        S(\Gamma):=\{(\gamma_i(t),\gamma_i'(t)):t\in\R,\;i\in I\}\subset SM.
    \end{align}
    It is not hard to check that $S(\Gamma)$ is a set invariant under the geodesic flow and
    \begin{align}
        \bigcup_{i\in I}\gamma_i=\pi(S(\Gamma)).
    \end{align}
    So in our case, the problem with the size of unions of geodesics is equivalent to the problem with the size of projections of sets invariant under the geodesic flow. For simplicity, we say that $E\subset SM$ is invariant if it is invariant under the geodesic flow.

    In the case $M=\R^d$, suppose $A\subset A(d,1)=\{(x,v)\in\R^d\times S^{d-1}:x\;\bot\; v\}$ and 
    \begin{align*}
        S(A)=\{(x+tv,v)\in\R^d\times S^{d-1}:t\in\R,\;(x,v)\in A\}\subset S(\R^d).
    \end{align*}
    $S(A)$ is an invariant set, and we have
    \begin{align}
        \dH A=\dH S(A)-1.
    \end{align}
    
    Motivated by this, we can state the corresponding theorem on complete manifolds in terms of projections of invariant sets. 
   \begin{thm}\label{Hausdorff dimension, unions of geodesics}
        Suppose 
        $M$ is a $d$-dimensional complete Riemannian manifold. Let $1\leq k\leq d-1$ be an integer and $\beta\in[0,1]$. If $E\subset SM$ is an invariant set with $\dH E\geq 2(k-1)+1+\beta$, then 
        \begin{align*}
            \dH\pi(E)\geq k+\beta.
        \end{align*}
    \end{thm}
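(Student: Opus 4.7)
The plan is to proceed by contrapositive: assuming $\dH \pi(E) < k + \beta$, I would deduce $\dH E < 2(k-1) + 1 + \beta$. The proof follows the structural outline of Zahl's argument for unions of affine lines in $\R^d$, with the flat multilinear Kakeya theorem replaced by a curved analogue tailored to geodesic tubes, and combined with the Bourgain--Guth reduction.

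First I would discretize. Equipping $E$ with a Frostman measure of exponent slightly below $\dH E$ and pigeonholing across dyadic scales, I extract a scale $\delta > 0$ and a $\delta$-separated family $\mathcal{T}$ of unit-length geodesic $\delta$-tubes with $|\mathcal{T}| \gtrsim \delta^{-(\dH E - 1) + o(1)}$; the loss of $\delta^{-1}$ in passing from $E$ to $\mathcal{T}$ reflects that each geodesic in $E$ contributes a one-parameter orbit under the flow, which is absorbed by the longitudinal extent of the tube. Likewise, the hypothesis $\dH \pi(E) < k + \beta$ forces every tube in $\mathcal{T}$, up to negligible error, to be contained in a set $X \subset M$ admitting a $\delta$-cover of cardinality $N \lesssim \delta^{-(k+\beta) + o(1)}$. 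The target contradiction is the upper bound $|\mathcal{T}| \lesssim \delta^{-(2(k-1)+\beta) + o(1)}$.

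The principal analytic input is a $(k+1)$-linear curved Kakeya estimate of Bennett--Carbery--Tao / Guth type: for sub-families $\mathcal{T}_1, \ldots, \mathcal{T}_{k+1} \subset \mathcal{T}$ whose tangent directions at a common base point span a $\nu$-transverse $(k+1)$-frame, one has the sharp multilinear bound on
\begin{equation*}
    \int \prod_{j=1}^{k+1} \Bigl( \sum_{T \in \mathcal{T}_j} \mathbf{1}_T \Bigr)^{1/k},
\end{equation*}
with implied constant depending on $\nu$ but independent of $\delta$ and of the cardinalities $|\mathcal{T}_j|$. I would prove this by working in normal coordinates on a chart of size comparable to the injectivity radius: geodesics there coincide with Euclidean lines modulo second-order curvature errors, and $\nu$-transversality of initial tangent vectors persists along the flow (the ``transversal property of geodesics'' mentioned in the abstract). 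The tangent planes of the tubes thus remain quantitatively transverse at every crossing, and the flat multilinear Kakeya theorem of Guth transfers via a standard perturbation argument. With this estimate in hand, the Bourgain--Guth dichotomy converts it into the desired linear bound: partitioning a reference ball into sub-balls of radius $K^{-1}$, within each sub-ball the tubes meeting it either (a) contain a $\nu$-transverse $(k+1)$-tuple of directions, in which case the multilinear Kakeya bound applies, or (b) are all clustered in a $K^{-1}$-neighborhood of some $k$-dimensional subspace of the tangent space, in which case the induction hypothesis on $k$ applies to the corresponding curved slab. Iterating from scale $K^{-1}$ down to $\delta$ closes the argument.

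The main obstacle is proving the curved multilinear Kakeya estimate. While flat multilinear Kakeya is robust under small smooth perturbation, one must (i) formulate $\nu$-transversality intrinsically on $SM$ rather than at a single chart point, (ii) ensure uniformity of constants across the manifold by exploiting uniform lower bounds on injectivity radius and uniform upper bounds on sectional curvature over compact exhaustions, and (iii) verify that the Bourgain--Guth concentration case remains tractable when the relevant $k$-plane is replaced by its curved counterpart, which exists only up to a curvature-controlled error. Given these ingredients, the closing of the induction is a routine adaptation of Zahl's Euclidean calculation.
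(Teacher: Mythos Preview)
Your proposal follows essentially the same strategy as the paper: discretize, establish a $(k+1)$-linear curved Kakeya estimate, and run the Bourgain--Guth argument with induction on scales to obtain the linear bound. The paper differs from your outline in two implementation choices worth noting. First, rather than working in normal coordinates on $M$ and tracking curvature errors there, the paper begins by localizing $E$ to a chart where geodesics can be parametrized by their endpoints on two parallel faces (the Ledrappier--Lindenstrauss parametrization); this reduces Theorem~1.2 to a purely Euclidean statement (Theorem~1.3) about families of curves satisfying an explicit pointwise transversality condition, after which the manifold plays no further role and your obstacles (i)--(iii) largely evaporate. Second, the paper proves the curved multilinear estimate not by a one-shot perturbation of flat multilinear Kakeya but by an iteration: on each $\delta^{1/2}$-cube a curved $\delta$-tube is approximated by a straight one, flat multilinear Kakeya (Carbery--Valdimarsson) is applied, and the procedure is repeated $O(\log\log(1/\delta))$ times, yielding a harmless sub-polynomial loss.

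One point in your outline needs correction: in the Bourgain--Guth narrow case you invoke ``the induction hypothesis on $k$.'' Neither Zahl's argument nor the paper's inducts on $k$; the narrow case is handled by rescaling the $\rho$-slab so that the curved tubes become $\delta/\rho^2$-tubes in a unit box, checking that the non-concentration (i.e.\ $(\delta,2(k-1)+\beta)$-set) condition survives the rescaling, and applying the \emph{scale} induction hypothesis at the coarser scale. Inducting on $k$ would require passing to a genuinely lower-dimensional ambient problem, which the curved setup does not readily permit and which in any case has no base at $k=1$. Your closing phrase ``iterating from scale $K^{-1}$ down to $\delta$'' suggests you may already have scale induction in mind; if so, the argument is correct, but the write-up should make this unambiguous.
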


    In Section 2, we will see that the Hausdorff dimension on $SM$ can be defined independently of the explicit Riemannian metrics. 
    
    Note that when $d=2$, Theorem \ref{Hausdorff dimension, unions of geodesics} will imply that the projection $\pi$ preserves Hausdorff dimension, i.e, if $\dH E\leq 2$, then
    \begin{align*}
        \dH \pi(E)=\dH E.
    \end{align*}
    
    The case $d=2$ of Theorem \ref{Hausdorff dimension, unions of geodesics} was proved by Ledrappier and Lindenstrauss \cite{LL03} using simple geometry properties and the energy method. Although their original result concerns projections of invariant measures on compact manifolds, after some modifications, it also applies to projections of invariant sets on complete manifolds.

    In this paper, we shall prove Theorem \ref{Hausdorff dimension, unions of geodesics} for all $d\geq 2$.
    With some parametrizations of $SM$, it suffices to consider unions of smooth curves satisfying the regular condition and the transversal condition.
    \begin{thm}\label{Unions of smooth curves}
        Suppose $A\subset[-1,1]^{d-1}\times[-1,1]^{d-1}$ and $\dH A\geq 2(k-1)+\beta$ for some integer $1\leq k\leq d-1$ and $\beta\in[0,1]$. Let $\{\gamma_y\}_{y\in A}$ be a collection of smooth curves such that $\gamma_y$ joints $(y_1,-1)$ and $(y_2,1)$ and can be parametrized by its last coordinate
        \begin{align*}
            \gamma_y(c)=(P_y(c),c),\;c\in[-1,1].
        \end{align*}
        We assume that $\{\gamma_y\}_{y\in A}$ satisfies the followings.
       
       1. (\textbf{Regular Condition}) 
       
       There exists $C\geq 1$ such that
       \begin{align}
           \sup_{y\in A}\|P_{y}(\cdot)\|_{C^2([-1,1])}\leq C.
       \end{align}

        2. (\textbf{Transversal Condition})
        
        Let 
        \begin{align}
            e_y(c)=\frac{\frac{d}{dc}\gamma_y(c)}{|\frac{d}{dc}\gamma_y(c)|}
        \end{align} 
        denote the tangent direction of $\gamma_y$ at the point $\gamma_y(c)$.
        Then there exists $0<m<\infty$ such that for $y,y'\in[-1,1]^{d-1}\times[-1,1]^{d-1}$, $c,c'\in[-1,1]$,
        \begin{align}
           |P_y(c)-P_{y'}(c)|+|e_y(c)-e_{y'}(c)|\geq m(|P_y(c')-P_{y'}(c')|+|e_y(c')-e_{y'}(c')|).
        \end{align}

        Then
        \begin{align}
            \dH( \bigcup_{y\in A}\gamma_y)\geq k+\beta.
        \end{align}
    \end{thm}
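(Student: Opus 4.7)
My plan is to adapt Zahl's proof of Conjecture~\ref{conjecture, unions of lines} to the curved setting, combining the three ingredients announced in the abstract: a Frostman-type discretization, a $(k+1)$-linear curved Kakeya estimate driven by the transversal condition, and a Bourgain--Guth induction on scales.

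First I would reduce the Hausdorff dimension statement to a discretized $\delta$-tube estimate. Assume for contradiction that $\dH\bigcup_{y\in A}\gamma_y<k+\beta$. A standard Frostman-and-pigeonhole argument on $A$, combined with the regular condition so that the $\delta$-neighborhood of $\gamma_y$ is comparable to a single $\delta$-tube $T_y$, extracts, for every $\eta>0$ and every sufficiently small $\delta>0$, a $\delta$-separated set $A_\delta\subset A$ with $|A_\delta|\gtrsim\delta^{-2(k-1)-\beta+\eta}$ such that $\bigl|\bigcup_{y\in A_\delta}T_y\bigr|\lesssim\delta^{d-k-\beta-\eta}$. The goal becomes to contradict this upper bound.

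Next I would set up the $(k+1)$-linear curved Kakeya estimate. The transversal condition guarantees that if the tangent vectors $e_{y^{(1)}}(c_1),\dots,e_{y^{(k+1)}}(c_{k+1})$ span a quantitatively non-degenerate $(k+1)$-simplex at some parameter values, they continue to do so at every parameter value with a constant depending only on $m$. Combined with the $C^2$-bound of the regular condition, which lets one locally straighten the curves over short arcs and compare their $\delta$-tubes to tubes around their tangent lines, this yields, for $(k+1)$-transversal subfamilies $A_1,\dots,A_{k+1}\subset A_\delta$, an inequality of the shape
\begin{align*}
    \int_{[-1,1]^d}\prod_{j=1}^{k+1}\Bigl(\sum_{y\in A_j}\chi_{T_y}(x)\Bigr)^{1/k}dx\lesssim_\varepsilon\delta^{-\varepsilon}\prod_{j=1}^{k+1}\bigl(\delta^{d-1}|A_j|\bigr)^{1/k},
\end{align*}
obtained by applying Guth's proof of multilinear Kakeya on each straightened piece and patching via the uniform constant $m$.

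Finally I would run a Bourgain--Guth broad/narrow dichotomy. Fix a direction scale $\rho\in[\delta,1]$; on each $\rho$-ball $B$ in space, partition the tangent directions of the tubes meeting $B$ into $\rho$-caps. In the broad case one selects $k+1$ caps whose tangent directions are quantitatively transversal; applying the multilinear estimate above gives a strong lower bound on the contribution of those tubes to $|\bigcup T_y|$. In the narrow case essentially all tangent directions of incident tubes lie in a $k$-dimensional slab, and using $C^2$-regularity to approximate the curves by their tangent lines at scale $\rho$, the narrow contribution is governed by a family of curves essentially confined to a $k$-dimensional curved submanifold, where the $(k-1)$-case of Theorem~\ref{Unions of smooth curves} applies by induction on $k$ (the base $k=1$ being the Ledrappier--Lindenstrauss / H\'era--Keleti--M\'ath\'e argument). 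Summing over dyadic $\rho$ and optimizing yields $\bigl|\bigcup_{y\in A_\delta}T_y\bigr|\gtrsim\delta^{d-k-\beta+O(\eta)}$, contradicting the upper bound from Step~1. The main obstacle will be the narrow case: for lines, ``tubes concentrated in a slab'' genuinely live in a rigid $k$-dimensional affine subspace, enabling a clean induction on $k$; for curves no exact algebraic substructure exists, so I will need a scale-dependent approximation by tangent lines together with a verification that the restricted subfamily still satisfies the transversal and regular conditions with uniform constants, so that the inductive hypothesis in $k$ remains applicable after rescaling.
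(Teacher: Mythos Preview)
Your first two steps---discretization via Frostman and pigeonholing, and the $(k+1)$-linear curved Kakeya estimate obtained by straightening on short arcs---are essentially what the paper does in Section~4.1 and Theorem~4.5 (the paper iterates the Carbery--Valdimarsson multilinear Kakeya at scales $\delta\to\delta^{1/2}$, which is your ``patch via the uniform constant $m$'' plan).

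The gap is your narrow case. You propose induction on $k$: when the tangent directions cluster near a $k$-plane $H$, reduce to the $(k-1)$-case of the theorem inside a ``$k$-dimensional curved submanifold.'' For straight lines this is legitimate, because lines whose directions lie in $H$ are contained in translates of $H$ and one can slice. For curves there is no such submanifold: knowing $e_y(c)\in\mathcal N_\rho(H)$ at a single height $c$ does not confine $\gamma_y$ to anything $k$-dimensional; as $c$ varies the curves bend out of every candidate slab. Approximating by tangent lines only controls the geometry on an interval of length $\sim\rho$ in $c$, and even there the restricted family carries no intrinsic $(2k-2)$-dimensional transversal condition---the hypothesis you have is a $(2d-2)$-dimensional statement, and you would have to manufacture a new one to invoke the $(k-1)$-case. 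You flag this yourself as ``the main obstacle,'' and indeed I do not see how to close it.

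The paper sidesteps the issue by inducting on \emph{scales}, not on $k$. The transversal condition is used once, at the start, to turn the $(\delta,2(k-1)+\beta)$-set hypothesis on $A$ into a non-concentration condition on $\{(P_y(c),e_y(c)):y\in A\}$ that is uniform in $c$ (Theorem~4.8). In the narrow case, tubes with $e_y(c_J)$ in a fixed $\rho$-cap $\tilde\tau$ are packed into straight $\rho^2$-tubes $T$ in direction $e(\tilde\tau)$; the anisotropic rescaling $(x',x_d)\mapsto(\rho^{-2}x',\,C\rho^{-1}x_d)$ maps each $T$ back to $[-1,1]^d$ and the curved $\delta$-tubes inside it to curved $(\delta/\rho^2)$-tubes satisfying the \emph{same} regular and non-concentration conditions with the \emph{same} constants. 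One then applies the inductive hypothesis at the larger scale $\delta/\rho^2$. A H\"older count of the $\rho$-caps covering $S^{d-1}\cap\mathcal N_\rho(H)$ supplies the factor $\rho^{(1-k)(p-1)}$, and the induction closes because the gain $\rho^{p\epsilon}$ beats the accumulated constants. No dimensional reduction is ever needed.
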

    Moreover, Theorem \ref{Hausdorff dimension, unions of geodesics} is sharp for complete manifolds with constant sectional curvature and
    we believe that Theorem \ref{Hausdorff dimension, unions of geodesics} can be improved for complete manifolds with non-constant sectional curvature and dimension $\geq 3$.
    
    \subsection{ Structure of the paper}The paper is organized as follows. In Section 2, we shall give some preliminaries, including the Hausdorff dimension on manifolds and the Hausdorff dimension of invariant sets on $S(\R^d)$. In Section 3, we shall prove Theorem \ref{Hausdorff dimension, unions of geodesics} by Theorem \ref{Unions of smooth curves} using a proper parametrization of $SM$ and the transversal property of geodesics. In Section 4, we shall give a complete proof of Theorem \ref{Unions of smooth curves} by a multilinear curved Kakeya estimate and the Bourgain-Guth argument. In Section 5, we shall give some remarks on Theorem \ref{Hausdorff dimension, unions of geodesics}.

    \subsection{Acknowledgements} The author would like to thank his supervisor, Bochen Liu, for many useful suggestions. The author would also like to thank Josh Zahl and Yakun Xi for helpful discussions.
    
\section{Preliminaries}
    \subsection{Hausdorff dimension on manifolds}
    Suppose $g$ is a Riemannian metric on a manifold $M$ and $d_g$ is the induced Riemannian distance. It is natural to define the Hausdorff dimension associated with $d_g$, denoted by $\dim_{g}$. 
    
    We say that a coordinate chart $(U,\phi)$ is regular if there is another chart coordinate $(V,\psi)$ such that $\overline{U}\subset V$. Suppose $\{(U_i,\phi_i)\}$ is a collection of regular coordinate charts such that $M\subset \cup_i U_i$. By the properties of Hausdorff dimension, we have 
    \begin{align*}
        \dim_g(E)=\sup\limits_{i}\dim_{g}(E\cap U_i).
    \end{align*}
    By Lemma 13.28 in \cite{smoothmanifoldsJohnMlee}, on each $U_i$, the Riemannian metric $g$ is equivalent to the Euclidean metric. So
    \begin{align*}
        \dim_{g}(E\cap U_i)=\dH(\phi_i(E\cap U_i)),
    \end{align*}
    where $\dH$ is the Hausdorff dimension defined in terms of Euclidean distance.
    
    The right-hand side of this equality is independent of $g$, so we can define the Hausdorff dimension that is independent of $g$ (also, independent of the collection $\{(U_i,\phi_i)\}$):
    \begin{align}\label{Hausdorff dimension on manifolds. local}
        \dim_\H(E):=\sup\limits_{i}\dH(\phi_i(E\cap U_i)).
    \end{align}
    We use $\dim_\H$ on both sides, as it will not confuse.
    \subsection{Hausdorff dimension of invariant sets on $S(\R^d)$}
     Let $A(d,1)$ denote the space of affine lines in $\R^d$. Any affine line can be uniquely determined by a direction and a translation orthogonal to this direction. In other words, we can write
     \begin{align*}
         A(d,1)=\{(x,v)\in\R^d\times S^{d-1}:x\;\bot\; v\}\subset\R^d\times S^{d-1}.
     \end{align*}
     We equip $A(d,1)$ with the metric
    \begin{align*}
        d((x,v),(y,w))=|x-y|+|v-w|,\;(x,v),\;(y,w)\in A(d,1).
    \end{align*}
    Suppose $A\subset A(d,1)$ and 
    \begin{align*}
        S(A)=\{(x+tv,v)\in\R^d\times S^{d-1}:t\in\R,\;(x,v)\in A\}\subset S(\R^d)=\R^d\times S^{d-1}.
    \end{align*}
    $S(A)$ is an invariant set, and we have the following.
    \begin{prop}
        \label{Euclidean case}
        \begin{align}
        \dH A=\dH S(A)-1.
    \end{align}
    \end{prop}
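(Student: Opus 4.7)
The plan is to exhibit the parametrization map
\begin{align*}
\Phi: A(d,1) \times \R \to S(\R^d), \qquad \Phi(x,v,t) := (x+tv,\, v),
\end{align*}
as a locally bi-Lipschitz homeomorphism onto its image. Since $S(A) = \Phi(A\times\R)$ by definition, combining this with the product formula for Hausdorff dimension will give the desired equality $\dim_\H S(A) = \dim_\H A + 1$.

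First I would check Lipschitz estimates in both directions. The forward bound
\begin{align*}
|\Phi(x,v,t) - \Phi(x',v',t')| \leq C_R\bigl(|x-x'| + |v-v'| + |t-t'|\bigr)
\end{align*}
is immediate from the identity $x + tv - x' - t'v' = (x-x') + t(v-v') + (t-t')v'$ and is uniform on $|t|, |t'| \leq R$. For the reverse bound, the orthogonality condition $x \perp v$ makes $\Phi$ injective with an explicit smooth inverse: given $(y,v) = \Phi(x,v,t)$, one recovers $t = y \cdot v$ and $x = y - (y\cdot v)\,v$. This inverse is smooth, hence Lipschitz on bounded subsets of $\R^d \times S^{d-1}$.

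Next I would decompose $A \times \R = \bigcup_{R,n} A_R \times [n, n+1]$, with $A_R := \{(x,v) \in A : |x| \leq R\}$, so that $\Phi$ is genuinely bi-Lipschitz on each piece. Because $[n,n+1]$ has Hausdorff and upper box-counting dimension both equal to $1$, the upper and lower product inequalities for Hausdorff dimension collapse to the equality $\dim_\H(A_R \times [n,n+1]) = \dim_\H A_R + 1$. Together with countable stability of Hausdorff dimension this yields
\begin{align*}
\dim_\H S(A) \;=\; \sup_{R,n}\,\dim_\H \Phi\bigl(A_R \times [n,n+1]\bigr) \;=\; \sup_R \dim_\H A_R + 1 \;=\; \dim_\H A + 1.
\end{align*}

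I do not expect a serious obstacle: the content of the proposition is really the observation that $S(A)$ carries a natural product structure $A \times \R$ via the affine parametrization $\Phi$. The only point requiring attention is that one needs \emph{equality} in the product formula (rather than only the lower Marstrand-type inequality), which is available here precisely because the second factor is an interval; reducing to bounded pieces before invoking it is the reason for the preliminary decomposition above.
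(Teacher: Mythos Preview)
Your argument is correct. Both you and the paper localize to bounded pieces $A_R$ and both exploit, implicitly or explicitly, that the projection $(y,v)\mapsto (y-(y\cdot v)v,\,v)$ is Lipschitz on bounded sets. The difference lies in how the lower bound $\dim_\H S(A)\ge \dim_\H A+1$ is obtained. The paper treats the two inequalities separately: for the upper bound it builds a cover of $S(A_R)$ by hand from a cover of $A_R$, and for the lower bound it invokes a fibering lemma of Kaufman--Mattila applied to the Lipschitz projection $\phi$, using that each fiber $\phi^{-1}(x',v)$ meets the bounded region in a set of positive $\mathcal{H}^1$-measure. You instead observe that $\Phi$ is a genuine bi-Lipschitz equivalence between $A_R\times[n,n+1]$ and its image, so that both inequalities follow at once from the product formula $\dim_\H(E\times I)=\dim_\H E+1$ for an interval $I$. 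Your route is a little more streamlined and relies only on the standard product inequalities (lower via Marstrand, upper via upper box dimension of the interval factor), whereas the paper's route keeps the upper bound fully elementary but pays for the lower bound with an external slicing lemma.
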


    \begin{proof}
        First, we show that
    \begin{align*}
        \dH S(A)\leq \dH A +1.
    \end{align*}
    For $R>0$, we define 
    \begin{align*}
        A_R=\{(x,v)\in A:|x|\leq R\}.
    \end{align*}
    Suppose $\alpha>\dH(A_R)$ and $\{B_i\}_{i\in I}=\{B((x_i,v_i),r_i)\}_{i\in I}$ is a covering of $A_R$ with $\sum_{i\in I}r_i^{\alpha}<\infty$,. For $(x,v)\in A$, we define 
    \begin{align*}
        l_{x,v,R}=\{(x+tv,v):t\in\R,\;|x+tv|\leq R\}.
    \end{align*}
    If $|(x,v)-(x_i,v_i)|<r_i$, it is not difficult to check that $l_{x,v,R}$ is contained in the $(2R+2)r_i$ neighborhood of $l_{x_i,v_i,R}$. The $(2R+2)r_i$ neighborhood of $l_{x_i,v_i,R}$ can be covered by at most $Cr_i^{-1}$ balls of radius $(2R+2)r_i$, thus we have $\H^{\alpha+1}(S(A_R))\lesssim_R1$. Let $R\to\infty$ and $\alpha\searrow\dH A$, we get
    \begin{align*}
        \dH S(A)\leq\dH A+1.
    \end{align*}

    Conversely, for $(x,v)\in\R^d\times S^{d-1}$, we define the map 
    \begin{align*}
        \phi:\R^d\times S^{d-1}\to\R^d\times S^{d-1},\;\phi(x,v)=(x',v),
    \end{align*}
    where $x'=x-\langle x,v\rangle v$. By definition, we have $A=\phi(S(A))$ and $S(A)=\phi^{-1}(A)$.
    
    By direct computation, for $(x,v),(y,w)\in B(0,R)\times S^{d-1}$, we have
    \begin{align*}
        &|\phi(x,v)-\phi(y,w)|\\
        =&|x'-y'|+|v-w|\\
        \leq&|x-y|+|v-w|+|\langle x,v\rangle v-\langle y,w\rangle w|\\
        \leq&|x-y|+|v-w|+|\langle x,v\rangle-\langle y,w\rangle|+|\langle y,w\rangle||v-w|\\
        \leq&|x-y|+|v-w|+|\langle x,v-w\rangle|+|\langle x-y,w\rangle|+|\langle y,w\rangle||v-w|\\
        \leq&2|x-y|+(2R+1)|v-w|.
    \end{align*}
    So $\phi$ is Lipschitz on $B(0,R)\times S^{d-1}$. And for $(x',v)\in S(A_R)$, $\mathcal{H}^1(\phi^{-1}(x',v)\cap B(0,R)\times S^{d-1})>0$. By Lemma in \cite{KM75}, we can conclude that 
    \begin{align*}
        \dH S(A_R)\geq\dH A_R+1, \;R>0.
    \end{align*}
    Let $R\to\infty$, and we have
    \begin{align*}
        \dH S(A)\geq\dH A+1.
    \end{align*}
    \end{proof}

\section{Theorem \ref{Unions of smooth curves} implies Theorem \ref{Hausdorff dimension, unions of geodesics}}
    In this section, we shall show that Theorem \ref{Unions of smooth curves} implies Theorem \ref{Hausdorff dimension, unions of geodesics}.
    
    First, we introduce a parametrization of $SM$.
    We learned this parametrization from \cite{LL03}.
    
    Suppose $p\in M$. We can choose a regular coordinate chart $(U,\phi)$ such that 
    
        (1) $\phi(p)=0$
        
        (2) $[-1,1]^d\subset\phi(U)$
        
        (3) For $p_1\in\mathcal{I}_1=\phi^{-1}([-1,1]^{d-1}\times\{-1\})$, $p_2\in\mathcal{I}_2=\phi^{-1}([-1,1]^{d-1}\times\{1\})$, there exists a unique geodesic $\gamma_{p_1,p_2}\subset U$ jointing them. Moreover, there exists a uniform constant $K$ such that if $\phi(\gamma_{p_1,p_2}(t))=(
        \tilde{x}(t),x^d(t))$, we have 
        \begin{align}\label{nonhorizontal}
            |\frac{d}{dt}\tilde{x}(t)|\leq K|\frac{d}{dt}x^d(t)|.
        \end{align}

    For $p_1\in\mathcal{I}
    _1,p_2\in \mathcal{I}_2$, after reparametrization, we can assume that $|\frac{d}{dt}\gamma_{p_1,p_2}(t)|=1$. Now we can define the parametrization $\psi:D=[-1,1]^{d-1}\times[-1,1]^{d-1}\times[0,1]\to SM$ by
    \begin{align}
        \psi(y_1,y_2,t)=\left(\gamma_{p_1,p_2}\left(d_M(p_1,p_2)t\right),(\frac{d}{dt}\gamma_{p_1,p_2})\left(d_M(p_1,p_2)t\right)\right),
    \end{align}
    where 
    \begin{align*}
        p_1=\phi^{-1}(y_1,-1),\;p_2=\phi^{-1}(y_2,1).
    \end{align*}
    It is clear that $\psi$ is a diffeomorphism from $D$ to $\psi(D)$, then if $E\subset SM$,
    \begin{align}
        \dH(E\cap\psi(D))=\dH(\psi^{-1}(E\cap\psi(D))).
    \end{align}
    
    Let $\pi_1$ denote the projection $(y_1,y_2,t)\mapsto(y_1,y_2)$, $A=\pi_1(\psi^{-1}(E\cap\psi(D)))$ and $\gamma_y(t)=\phi\circ\pi\circ\psi(y,t)$. Then, if furthermore $E$ is invariant, we have
    \begin{align}
        (\phi\circ\pi)(E\cap\psi(D))=\bigcup_{y\in A}\gamma_y.
    \end{align}
    and
    \begin{align}\label{endpoints definition of Hausdorff dimension}
        \dH(A)=\dH (E\cap\psi(D))-1.
    \end{align}

    Now we get a collection of smooth curves $\{\gamma_y\}_{y\in A}$ such that $\gamma_y$ joints $(y_1,-1)$ and $(y_2,1)$. Moreover, with the non-horizontal condition (\ref{nonhorizontal}), we can reparametrize the curve $\gamma_y$ by its last coordinate
    \begin{align}
        \gamma_y(c)=(P_y(c),c),\;c\in[-1,1].
    \end{align}
    And the non-horizontal condition (\ref{nonhorizontal}) implies that 
    \begin{align}
        \sup\limits_{\substack{c\in[-1,1]\\y\in[-1,1]^{d-1}\times[-1,1]^{d-1}}}|\partial ^{\beta}_y\partial ^{\alpha}_cP_{y}(c)|\leq C,\;|\alpha|,|\beta|\leq 2.
    \end{align}
    for some uniform constant $C$.

     It remains to check the transversal property for geodesics. Indeed, we have the following, which is first mentioned in \cite{LL03} to the best of the author's knowledge.
    \begin{lem}\label{Transversal property}
        Suppose $\gamma_y(c)=(P_y(c),c)$, $y\in[-1,1]^{d-1}\times[-1,1]^{d-1}$. Let 
        \begin{align*}
            e_y(c)=\frac{\frac{d}{dc}\gamma_y(c)}{|\frac{d}{dc}\gamma_y(c)|}
        \end{align*} 
        denote the tangent direction of $\gamma_y$ at the point $\gamma_y(c)$.
        Then there exists $0<m<\infty$ such that for $y,y'\in[-1,1]^{d-1}\times[-1,1]^{d-1}$, $c,c'\in[-1,1]$,
        \begin{align}\label{transversal, 2 dim}
           |P_y(c)-P_{y'}(c)|+|e_y(c)-e_{y'}(c)|\geq m(|P_y(c')-P_{y'}(c')|+|e_y(c')-e_{y'}(c')|).
        \end{align}
    \end{lem}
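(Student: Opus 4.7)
The plan is to exploit the fact that a geodesic in $M$ is uniquely determined by its position and unit tangent at any one point, together with smooth dependence of the geodesic flow on its initial data. Writing $F_{y,y'}(c) := |P_y(c)-P_{y'}(c)| + |e_y(c)-e_{y'}(c)|$, the quantity on each side of (\ref{transversal, 2 dim}) is, up to a universal constant, the distance in the unit tangent bundle $SM$ (read through the chart $(U,\phi)$) between $(\gamma_y(c),e_y(c))$ and $(\gamma_{y'}(c),e_{y'}(c))$. So the lemma amounts to the assertion that $F_{y,y'}(c)$ cannot change by more than a uniform constant factor as $c$ varies in $[-1,1]$.

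To pin this down, for each pair $c,c' \in [-1,1]$ I would introduce the reparametrized geodesic flow $\Phi_{c,c'}$ defined as follows: given $p$ in the slice $\{x^d = c\}$ and a unit vector $v$ satisfying the non-horizontality bound (\ref{nonhorizontal}), let $\sigma$ be the unique geodesic with $\sigma(0)=p$, $\dot\sigma(0)=v$; by (\ref{nonhorizontal}) the last coordinate of $\sigma$ is strictly monotone in arc length, so there is a unique time $s=s(p,v,c')$ at which $\sigma(s)$ has last coordinate $c'$, and we set $\Phi_{c,c'}(p,v) := (\sigma(s),\dot\sigma(s)/|\dot\sigma(s)|)$. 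This map is smooth: $\sigma$ depends smoothly on $(p,v)$ by ODE theory, and $s$ depends smoothly on $(p,v,c')$ by the implicit function theorem applied to $x^d(s)=c'$ (with $dx^d/ds \neq 0$ by non-horizontality). By construction $(\gamma_y(c'),e_y(c')) = \Phi_{c,c'}(\gamma_y(c),e_y(c))$ for every $y$, and similarly with $y$ replaced by $y'$.

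The admissible pairs $(p,v)$ form a compact subset of $SU$ (a bounded chart together with an a priori bound on the tangent vectors), and $\Phi_{c,c'}$ depends continuously on $c,c'\in[-1,1]$, so on this compact region there is a constant $C_0$ with $\|d\Phi_{c,c'}\| \le C_0$ uniformly in $c,c'$. Applying this Lipschitz estimate to the two points $(\gamma_y(c),e_y(c))$ and $(\gamma_{y'}(c),e_{y'}(c))$ gives $F_{y,y'}(c') \le C_0\,F_{y,y'}(c)$, so taking $m := 1/C_0$ proves the lemma. The only real technical point is confirming this uniformity: one must check that every geodesic appearing in the definition of $\Phi_{c,c'}$ remains inside $U$ with monotone last coordinate throughout the range $s \in [0,s(p,v,c')]$ and that the resulting $C^1$ bounds on $\Phi_{c,c'}$ do not degenerate as $c,c'$ vary. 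This is not a separate difficulty but is built into the construction of the chart $(U,\phi)$ in Section 3 (conditions (1)--(3) together with (\ref{nonhorizontal})), so the lemma follows once these uniformity statements are made rigorous.
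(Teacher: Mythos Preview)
Your argument is correct and is precisely the ``standard argument in ODEs'' that the paper alludes to (the paper itself omits the proof entirely). The ingredients you identify---smooth dependence of the geodesic flow on initial data, reparametrization by the last coordinate via the implicit function theorem using (\ref{nonhorizontal}), and compactness of the parameter domain to extract a uniform Lipschitz constant for $\Phi_{c,c'}$---are exactly what is needed, and your caveat about checking that the geodesics stay inside $U$ with monotone last coordinate is addressed by the chart construction (conditions (1)--(3)) as you note.
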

        
    The proof of this lemma is a standard argument in ODEs, and we omit it.

    For any $\epsilon>0$, we can always find $D$ such that 
    \begin{align*}
        \dH (E\cap\psi(D))\geq \dH E-\epsilon.
    \end{align*}
    We apply Theorem \ref{Unions of smooth curves} to $A=\pi_1(\psi^{-1}(E\cap\psi(D)))$ and let $\epsilon\to 0$ and complete the proof.

\section{Proof of Theorem \ref{Unions of smooth curves}}
\subsection{Discretization Argument}
Our first aim is to reduce the proof of Theorem \ref{Unions of smooth curves} to a discretized curved Kakeya estimate. We need some definitions and lemmas.
\begin{definition}[$(\delta,s)$-sets]
   Let $\delta,s>0$, and let $P\subset\R^d$ be a finite $\delta$-separated set. We say that $P$ is a $(\delta,s)$-set if it satisfies 
   \begin{align}
       \#(P\cap B(x,r))\leq (\frac{r}{\delta})^s,\;x\in\R^d,\;r\geq\delta.
   \end{align}
\end{definition}
\begin{lem}[Frostman]\label{Frostman}
    Let $\delta,s>0$, and $E\subset\R^d$ be any set with $\H_{\infty}^{s}(E)=c>0$. Then there exists a $(\delta,s)$-set $P\subset E$ such that $\#P\gtrsim c\delta^{-s}$.
\end{lem}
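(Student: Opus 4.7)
The plan is to first produce a Frostman-type measure on $E$ via the classical Frostman lemma for Hausdorff content, and then discretize that measure into a $\delta$-separated set inheriting the Frostman bound. From $\H_\infty^s(E)=c>0$ and the classical Frostman lemma, I obtain a positive Borel measure $\mu$ supported on $E$ with total mass $\mu(E)\gtrsim c$ and the Frostman bound $\mu(B(x,r))\leq r^s$ for every ball $B(x,r)\subset\R^d$.

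Next, I take a maximal $\delta$-separated subset $Q\subset\supp(\mu)$, so that $\{B(q,\delta)\}_{q\in Q}$ covers $\supp(\mu)$ while $\{B(q,\delta/2)\}_{q\in Q}$ are pairwise disjoint. I then discard the light points by defining
\[
P:=\{q\in Q:\mu(B(q,\delta))\geq c_0\delta^s\},
\]
where $c_0=c_0(d,s)>0$ is a small constant to be chosen. The $(\delta,s)$-set property of $P$ follows immediately from the bounded overlap (multiplicity $\leq C_d$) of $\delta$-balls around $\delta$-separated centers together with the Frostman bound on $\mu$: for any $x\in\R^d$ and $r\geq\delta$,
\[
c_0\delta^s\cdot\#(P\cap B(x,r))\leq\sum_{q\in P\cap B(x,r)}\mu(B(q,\delta))\leq C_d\,\mu(B(x,2r))\leq C_d(2r)^s,
\]
and choosing $c_0$ to absorb the constant $C_d 2^s$ yields $\#(P\cap B(x,r))\leq(r/\delta)^s$.

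The cardinality bound $\#P\gtrsim c\delta^{-s}$ is the heart of the argument. The plan is to argue that the thin points in $Q\setminus P$ capture only a small fraction of $\mu(E)$, so that the heavy balls $\{B(q,\delta)\}_{q\in P}$ already cover a subset of $\supp(\mu)$ of essentially full $\mu$-mass; combined with the upper bound $\mu(B(q,\delta))\leq\delta^s$ valid for every $q\in P$, this then forces $\#P\cdot\delta^s\gtrsim\mu(E)\gtrsim c$. The subtle point, which I expect to be the main obstacle, is ruling out that the thin balls together carry a large mass. The obvious estimate $\mu\bigl(\bigcup_{q\in Q\setminus P}B(q,\delta)\bigr)\lesssim|Q\setminus P|\,c_0\delta^s$ is useless without a priori control on $|Q\setminus P|$; to circumvent this, I would use an iterative refinement on dyadic scales, removing at each stage the densest $\delta$-clusters and re-covering them with a single larger ball, and invoking the Hausdorff content hypothesis at each stage to lower-bound the sum of $s$-th powers of radii in the resulting cover. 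Choosing $c_0$ small enough relative to the overlap constant then pushes the total weight of the removed clusters below $c/2$, leaving at least $c/2\cdot\delta^{-s}$ points in $P$.
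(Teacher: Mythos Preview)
The paper does not give its own proof of this lemma; it simply cites Appendix~A of F\"assler--Orponen \cite{FO14}.  So there is no in-paper argument to compare against, only the question of whether your sketch is complete.

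Your outline is the standard one up through the verification of the $(\delta,s)$-set property, but the cardinality step contains a real gap that you yourself flag and do not close.  Two concrete issues:

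\emph{Contradictory constraints on $c_0$.}  To obtain the $(\delta,s)$-bound with no implicit constant (as the paper's Definition~4.1 demands) you need $c_0\ge C_d 2^s$, i.e.\ $c_0$ large.  In the final paragraph you then write ``choosing $c_0$ small enough relative to the overlap constant''.  These cannot both hold.

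\emph{The light-mass estimate is not established.}  You correctly observe that $\sum_{q\in Q\setminus P}\mu(B(q,\delta))\le \#(Q\setminus P)\,c_0\delta^s$ is useless without control on $\#(Q\setminus P)$, and indeed a Frostman measure can have $\supp(\mu)$ containing $\sim\delta^{-d}$ many $\delta$-separated points while nearly all of them are light (take $\mu$ to be a convex combination of an $s$-dimensional self-similar measure and a small multiple of Lebesgue measure on a disjoint cube).  Your proposed ``iterative refinement on dyadic scales, removing the densest $\delta$-clusters'' is not a recognizable argument and, as written, does not produce a lower bound on $\#P$: removing dense clusters and re-covering them by larger balls controls the content of the covered region from \emph{below}, but you need to show the \emph{complement} still carries content $\gtrsim c$, and nothing in your sketch does that.

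The argument in \cite{FO14} avoids the heavy/light dichotomy entirely.  One clean route: let $P\subset E$ be a \emph{maximal} $\delta$-separated subset subject to the $(\delta,s)$-constraint.  Maximality forces every $x\in E$ either to lie within $\delta$ of some $p\in P$, or to lie in a ball $B(z,r)$ that is already saturated, meaning $\#(P\cap B(z,r))>(r/\delta)^s-1$.  Apply the $5r$-covering lemma to these saturated balls to get a disjoint subfamily $\{B(z_i,r_i)\}$ whose $5$-dilates cover the second case; then
\[
c\le \H_\infty^s(E)\le \#P\cdot\delta^s + 5^s\sum_i r_i^s,
\]
and disjointness plus saturation give $\sum_i r_i^s\lesssim \#P\cdot\delta^s$, which yields $\#P\gtrsim c\,\delta^{-s}$.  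This is the missing idea in your sketch.
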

For a proof of Lemma $\ref{Frostman}$, one can see Appendix A of \cite{FO14}.

Now we are ready to give the discretization argument. Suppose $S=\bigcup_{y\in A}\gamma_y$ and $\dH S=\alpha$. Let $k_0$ be a large positive integer, $\epsilon>0$.
we cover $S$ by a union of balls $\bigcup_{k=k_0}^{\infty}\bigcup_{B\in\mathcal{B}_k}B$, where $\mathcal{B}_k$ is a collection of balls with radius $2^{-k}$, and 
\begin{align}
    \sum_{k=k_0}^{\infty}2^{-k(\alpha+\epsilon)}\#\mathcal{B}_k\leq 1.
\end{align}
In particular, we have 
\begin{align}\label{number of balls}
    \#\mathcal{B}_k\lesssim 2^{k(\alpha+\epsilon)}.
\end{align}
For each $y\in A$, by construction, 
\begin{align*}
\H_{\infty}^1\left(\gamma_y\bigcap\left(\bigcup_{k=k_0}^{\infty}\bigcup_{B\in\mathcal{B}_k}B\right)\right)\gtrsim 1.
\end{align*}
Then there exists $k=k(y)$ such that
\begin{align*}
\H_{\infty}^1\left(\gamma_y\bigcap\left(\bigcup_{B\in\mathcal{B}_k}B\right)\right)\gtrsim \frac{1}{k^2}.
\end{align*}
Let 
\begin{align*}
    A_k=\{y\in A:k(y)=k\},
\end{align*}
then 
\begin{align*}
    \sum_{k=k_0}^{\infty}\H_{\infty}^{2(k-1)+\beta-\epsilon}(A_k)\geq \H_{\infty}^{2(k-1)+\beta-\epsilon}(A)>0.
\end{align*}
Then there exists $k_1\geq k_0$ such that
\begin{align}
    H_{\infty}^{2(k-1)+\beta-\epsilon}(A_{k_1})\gtrsim\frac{1}{k_1^2}.
\end{align}
Let $\delta=2^{-k_1}$. By Lemma \ref{Frostman}, there exists a $(\delta,2(k-1)+\beta-\epsilon)$-set $A'\subset A_{k_1}$ such that
\begin{align}\label{Frostman lemma, delta set}
    \#A'\gtrsim k_1^{-2}\cdot\delta^{-(2(k-1)+\beta-\epsilon)}.
\end{align}
Let 
\begin{align*}
    S'=\bigcup_{B\in\mathcal{B}_{k_1}}2B.
\end{align*}
and let
\begin{align*}
    T_y^{\delta}=\{x=(x',x_d)\in[-1,1]^d:|x'-P_y(x_d)|\leq \delta\}.
\end{align*}
$T_y^{\delta}$ is a curved $\delta$-tube and 
\begin{align}\label{intersection}
    |T_y^{\delta}\cap S'|\gtrsim k_1^{-2}\cdot\delta^{d-1},\;y\in A'.
\end{align}
Then 
\begin{align}
    \|\sum_{y\in A'}\chi_{T_{y}^{\delta}}\|_{L^1(S')}\gtrsim\#A'\cdot k_1^{-2}\cdot\delta^{d-1}\gtrsim k_1^{-4}\cdot \delta^{d-1-(2(k-1)+\beta-\epsilon)}.
\end{align}
On the other hand, by H\"older's inequality, we have 
\begin{align}
    \|\sum_{y\in A'}\chi_{T_{y}^{\delta}}\|_{L^1(S')}\lesssim\|\sum_{y\in A'}\chi_{T_y^{\delta}}\|_{L^p}\cdot|S'|^{1/p'}\lesssim\|\sum_{y\in A'}\chi_{T_y^{\delta}}\|_{L^p}\cdot\delta^{\frac{d-\alpha-\epsilon}{p'}}.
\end{align}
Hence
\begin{align}
    \|\sum_{y\in A'}\chi_{T_y^{\delta}}\|_{L^p}\gtrsim\log(\delta^{-1})^{4}\cdot\delta^{d-2k+1-\beta+\epsilon}\cdot\delta^{-\frac{d-\alpha-\epsilon}{p'}},p\geq 1.
\end{align}
Thus, it suffices to show the following curved Kakeya estimate.
\begin{thm}\label{curved Kakeya, discretized}
     Suppose $A\subset[-1,1]^{d-1}\times[-1,1]^{d-1}$ is a $(\delta,2(k-1)+\beta)$-set. Let $\{\gamma_y\}_{y\in A}$ be a collection of smooth curves such that $\gamma_y$ joints $(y_1,-1)$ and $(y_2,1)$ and can be parametrized by its last coordinate
    \begin{align*}
            \gamma_y(c)=(P_y(c),c),\;c\in[-1,1].
    \end{align*}
    We assume that $\{\gamma_y\}_{y\in A}$ satisfies the followings.
       
       1. (\textbf{Regular condition}) 
       
       There exists $C\geq 1$ such that
       \begin{align}\label{regularity}
           \sup_{y\in A}\|P_{y}(\cdot)\|_{C^2([-1,1])}\leq C.
       \end{align}

        2. (\textbf{Transversal condition})
        
        Let 
        \begin{align}
            e_y(c)=\frac{\frac{d}{dc}\gamma_y(c)}{|\frac{d}{dc}\gamma_y(c)|}
        \end{align} 
        denote the tangent direction of $\gamma_y$ at the point $\gamma_y(c)$.
        Then there exists $0<m<\infty$ such that for $y,y'\in[-1,1]^{d-1}\times[-1,1]^{d-1}$, $c,c'\in[-1,1]$,
        \begin{align}\label{transversality}
           |P_y(c)-P_{y'}(c)|+|e_y(c)-e_{y'}(c)|\geq m(|P_y(c')-P_{y'}(c')|+|e_y(c')-e_{y'}(c')|).
        \end{align}

     Let
    \begin{align*}
    T_y^{\delta}=\{x=(x',x_d)\in[-1,1]^d:|x'-P_y(x_d)|\leq \delta\}.
    \end{align*}
    Then for any $\epsilon>0$, 
    \begin{align}
        \|\sum_{y\in A}\chi_{T_y^{\delta}}\|_{L^p}\lesssim_{d,\epsilon}\delta^{\frac{1-k}{p'}-\epsilon}\cdot (\sum_{y\in A}|T_y^{\delta}|)^{1/p},\;p'=k+\beta.
    \end{align}
\end{thm}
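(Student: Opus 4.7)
The plan is to prove Theorem~\ref{curved Kakeya, discretized} by establishing a $(k+1)$-linear curved Kakeya estimate for transversal families of tubes, and then extracting the linear $L^p$ bound at the critical exponent $p' = k+\beta$ via the Bourgain--Guth argument, following the Euclidean strategy of Zahl.

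\textbf{The $(k+1)$-linear curved Kakeya.} I would first formulate and prove the following: for any $\nu > 0$ and any subfamilies $A_1,\dots,A_{k+1}\subset A$ such that the tangent vectors $e_{y_j}(c_0)$ at some common reference height $c_0$ are $\nu$-transverse (every wedge product $e_{y_1}(c_0)\wedge\cdots\wedge e_{y_{k+1}}(c_0)$ has norm at least $\nu$), the estimate
\begin{align*}
\int_{[-1,1]^d}\prod_{j=1}^{k+1}\bigg(\sum_{y\in A_j}\chi_{T_y^\delta}\bigg)^{\!1/k} \lesssim_{\epsilon,\nu,d}\, \delta^{-\epsilon}\prod_{j=1}^{k+1}\bigl(\delta^{d-1}\,\#A_j\bigr)^{1/k}
\end{align*}
holds. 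The transversal condition \eqref{transversality} upgrades $\nu$-transversality at one height to quantitative transversality everywhere, so transversality is a property of the tuple rather than a local one. The regular condition \eqref{regularity} implies that on each sub-ball of radius $\sqrt\delta$ every tube $T_y^\delta$ is contained in a straight $(C\delta)$-tube. Partitioning $[-1,1]^d$ into such sub-balls, applying the Bennett--Carbery--Tao multilinear Kakeya theorem on each, and gluing via the standard induction on scales yields the curved estimate with the $\delta^{-\epsilon}$ loss.

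\textbf{Bourgain--Guth decomposition.} With the multilinear estimate in hand, I would run a Bourgain--Guth dichotomy at an intermediate scale $K^{-1}$, where $K=\delta^{-\eta}$ is a small power of $\delta^{-1}$. On each ball $B$ of radius $K^{-1}$, classify the tubes $T_y^\delta$ meeting $B$ according to their tangent directions at the center of $B$. Either \emph{broad case}: one extracts $k+1$ subfamilies of comparable cardinality whose tangent vectors form a $K^{-C}$-transverse tuple, and the curved $(k+1)$-linear estimate on $B$ --- after interpolation with the trivial $L^1$ and $L^\infty$ bounds to produce the correct $\ell^{p'}$ exponent --- yields the desired $L^{p'}$ estimate on $B$; or \emph{narrow case}: all tangent vectors at the center of $B$ lie within a $K^{-1}$-neighborhood of some $k$-dimensional subspace, and the transversal condition \eqref{transversality} then forces the corresponding tubes to be confined to a $K^{-1}$-neighborhood of a $k$-dimensional sub-manifold through $B$. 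The narrow contribution is then absorbed by a lower-dimensional instance of the theorem, closing an induction on $k$.

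\textbf{Main obstacle.} The hardest step is the narrow case: one must show quantitatively that a family of curves whose tangent vectors cluster near a $k$-dimensional subspace at one point are confined, throughout $B$, to a $K^{-1}$-thin neighborhood of a single $k$-dimensional sub-manifold, so that the Frostman $(\delta,2(k-1)+\beta)$-set condition restricted to this manifold enables the induction on $k$. For affine lines this localization is immediate, but for curves it uses the full strength of both \eqref{regularity} and \eqref{transversality}, together with a careful foliation/ODE argument to turn directional concentration into geometric concentration. A secondary technical point is balancing the $\delta^{-\epsilon}$ losses accumulated from the multilinear step and from each iteration of the dichotomy; this is handled by choosing $\eta=\eta(\epsilon)$ sufficiently small and iterating the induction only a bounded number of times, exactly as in the Bourgain--Guth scheme.
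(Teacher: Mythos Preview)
Your multilinear step and the broad case are essentially what the paper does: the $(k+1)$-linear curved Kakeya estimate is proved exactly by the $\sqrt{\delta}$-partition plus straight multilinear Kakeya plus iteration that you describe, and the broad contribution is handled by pigeonholing a transverse $(k+1)$-tuple of caps and applying that estimate.

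The narrow case, however, is where your proposal has a genuine gap. You propose to close an induction on $k$ by confining the narrow tubes to a $K^{-1}$-neighbourhood of a $k$-dimensional submanifold and then invoking a ``lower-dimensional instance of the theorem''. But the theorem at level $k-1$ lives at a different Lebesgue exponent $p'=(k-1)+\beta$ and requires a $(\delta,2(k-2)+\beta)$-set hypothesis; neither matches what you have in the narrow case, where the set is still only $(\delta,2(k-1)+\beta)$ and you are still trying to control the $L^{p}$ norm with $p'=k+\beta$. There is no obvious numerology that makes an induction on $k$ close here. Moreover, the geometric confinement you flag as the main obstacle --- forcing directionally concentrated curves into a thin neighbourhood of a single $k$-submanifold --- is simply not needed.

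What the paper does instead is an \emph{induction on scales} with a \emph{fixed} angular parameter $\rho$ (not $K=\delta^{-\eta}$). In the narrow case one covers $S^{d-1}\cap\mathcal{N}_\rho(H)$ by $\lesssim\rho^{1-k}$ caps $\tilde\tau$ of radius $\rho$, applies H\"older to pick up a factor $\rho^{(1-k)(p-1)}$, and then for each cap $\tilde\tau$ rescales the thin slab $S_J$ (of thickness $\sim\rho$) anisotropically so that the $\delta$-tubes become $\delta/\rho^2$-tubes satisfying the same regular and non-concentration conditions. The induction hypothesis at the larger scale $\delta/\rho$ then gives $(\delta/\rho)^{(1-k)p/p'-p\epsilon}$; the H\"older loss $\rho^{(1-k)p/p'}$ cancels the $\rho^{-(1-k)p/p'}$ from rescaling, and what remains is a net gain of $\rho^{p\epsilon}$. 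Choosing $\rho$ small enough (depending only on $\epsilon$) makes this less than $1/2$, and the induction closes. No foliation or ODE argument beyond the transversal condition \eqref{transversality} (which guarantees the non-concentration condition survives rescaling) is required.
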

   \subsection{A multilinear curved Kakeya estimate}
   To prove the curved Kakeya estimate Theorem \ref{curved Kakeya, discretized}, we need a multilinear curved Kakeya estimate of level $k+1$ for all $1\leq k\leq d-1$. There are some results for the case $k=d-1$, one can see Theorem 6.2 of 
   \cite{BCT06} or Theorem 1.3 of \cite{Tao20}. Multilinear curved Kakeya estimates of remaining levels were not mentioned before, to the best of the author's knowledge. Here we can give a multilinear curved Kakeya estimate of all levels with $\log$-loss by the multilinear Kakeya theorem of \cite{CV13} and the multilinear Kakeya to multilinear curved Kakeya argument in Section 5 of \cite{Ben14}. Note that when $k=d-1$, Theorem 1.3 in \cite{Tao20} is stronger than Theorem \ref{multilinear curved Kakeya} below. We believe that for $k<d-1$, the $\log$-loss can be removed like Theorem 1.3 of \cite{Tao20}, but estimates with $\log$-loss are simple and enough in our case.
   
   We state the multilinear Kakeya theorem of \cite{CV13} here for later use.
   \begin{thm}[The multilinear Kakeya theorem,\;\cite{CV13}]
      Let $1\leq k\leq  d-1$ and $\delta>0$. There exists a constant $C_{d,k}$ such that if $\mathcal{T}_1,\dots,\mathcal{T}_{k+1}$ are $k+1$ families of $\delta$-tubes in $\R^d$, then we have
      \begin{align}\label{multilinear Kakeya}
          \int_{\R^d}(\sum_{T_1\in\mathcal{T}_1}\cdots\sum_{T_{k+1}\in\mathcal{T}_{k+1}}\chi_{T_1}\cdots\chi_{T_{k+1}}|e(T_1)\wedge\cdots\wedge e(T_{k+1})|)^{\frac{1}{k}}dx\leq C_{d,k}\cdot\delta^d(\prod_{j=1}^{k+1}\#\mathcal{T}_j)^{\frac{1}{k}}.
      \end{align}
      Here $e(T)\in S^{d-1}$ is the direction of the tube $T$ and $|e(T_1)\wedge u_2\wedge\cdots\wedge e(T_{k+1})|$ is the volume of the parallelepiped spanned by $e(T_1),\dots,e(T_{k+1})$.
   \end{thm}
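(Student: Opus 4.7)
The plan is to prove this endpoint $(k+1)$-linear Kakeya inequality by induction on scales, reducing at the finest scale to a Loomis--Whitney / Brascamp--Lieb base case. By homogeneity both sides scale the same way in $\delta$, so it would suffice to treat $\delta = 1$ and recover the stated dependence by rescaling.

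The first step is a direction decomposition. I would partition each family $\mathcal{T}_j$ into subfamilies of tubes whose directions lie in a small spherical cap around a fixed vector $v_j$. The wedge factor $|e(T_1)\wedge\cdots\wedge e(T_{k+1})|$ in the integrand restricts attention to tuples $(v_1,\ldots,v_{k+1})$ that are quantitatively transverse, so only boundedly many such tuples contribute at any given pointwise level and one can sum over them freely.

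The second step is the base case. For fixed quantitatively transverse directions, one localizes to unit balls $Q$, on which each $\delta$-tube $T_j$ may be replaced by an exact straight $\delta$-neighbourhood of a line in direction $v_j$. On $Q$ the claim is reduced to proving
\begin{equation*}
\int\Bigl(\prod_{j=1}^{k+1} f_j(\pi_{v_j^{\perp}} x)\Bigr)^{1/k}|v_1\wedge\cdots\wedge v_{k+1}|^{1/k}\,dx\;\lesssim\;\prod_{j=1}^{k+1}\|f_j\|_{L^1}^{1/k},
\end{equation*}
which is a Brascamp--Lieb inequality with linear maps given by the orthogonal projections $\pi_{v_j^{\perp}}\colon \R^d\to \R^{d-1}$ at exponent $1/k$. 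The wedge factor matches precisely the Brascamp--Lieb constant, and the inequality reduces, via Fubini over the orthogonal complement of $\mathrm{span}(v_1,\ldots,v_{k+1})$, to the classical Loomis--Whitney inequality in $\R^{k+1}$.

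The final step glues the local estimates into a global one. Summing over a partition of $\R^d$ into unit cubes and running an induction-on-scales bootstrap (the core of the CV13 factorization argument, or equivalently Guth's short slicing proof in the full multilinear case $k=d-1$) removes any logarithmic loss and yields the uniform constant $C_{d,k}$. The hard part will be the endpoint exponent $1/k$: the Brascamp--Lieb constant must be controlled \emph{uniformly} as the $v_j$ vary, and such uniformity at the endpoint is the deepest ingredient of CV13 -- precisely what distinguishes their proof from the heat-flow approach of Bennett--Carbery--Tao, which only delivers an $\epsilon$-loss version.
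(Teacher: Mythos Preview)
The paper does not prove this statement at all: it is quoted verbatim from \cite{CV13} (Carbery--Valdimarsson) and used as a black box in the proof of Theorem~\ref{multilinear curved Kakeya}. There is therefore no ``paper's own proof'' to compare your proposal against.

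That said, your outline is a reasonable high-level description of the Carbery--Valdimarsson argument. The ingredients you name---reduction to a transverse configuration, the Brascamp--Lieb/Loomis--Whitney inequality as the base case with the wedge factor appearing as the Brascamp--Lieb constant, and an induction on scales to pass from local to global---are indeed the skeleton of their proof. Your final paragraph is honest about where the real work lies: obtaining the endpoint exponent $1/k$ with a uniform constant (no $\epsilon$-loss) requires the factorisation/monotonicity machinery specific to \cite{CV13}, and your sketch does not supply that. As written, the proposal would at best recover the Bennett--Carbery--Tao version with an $\epsilon$-loss, which, incidentally, would already suffice for the application in this paper since Theorem~\ref{multilinear curved Kakeya} only claims a $\log\log(1/\delta)$-type loss anyway.
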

   The following is the multilinear curved Kakeya estimate we need.
   \begin{thm}\label{multilinear curved Kakeya}
       Suppose $A_1,\dots,A_{k+1}\subset[-1,1]^{d-1}\times [-1,1]^{d-1}$ and $0<\delta<e^{-2}$. Let $A=\cup_{j=1}^{k+1}A_j$ and $\{\gamma_y\}_{y\in A}$ be a collection of smooth curves such that $\gamma_y$ joints $(y_1,-1)$ and $(y_2,1)$ and can be parametrized by its last coordinate
        \begin{align*}
            \gamma_y(c)=(P_y(c),c),\;c\in[-1,1].
        \end{align*} Assume that 
       \begin{align*}
           C=\sup_{y\in A}\|P_y(\cdot)\|_{C^2}<\infty
       \end{align*}
       and
       for some $0<\rho<1$,
       \begin{align*}
           |e_{y^1}(c)\wedge\cdots e_{y^{k+1}}(c)|\geq \rho^k,\;y^{j}\in A_j,\;1\leq j\leq k+1,\;
       \end{align*}
       Let 
       \begin{align*}
    T_y^{\delta}=\{x=(x',x_d)\in[-1,1]^d:|x'-P_y(x_d)|\leq \delta\}.
       \end{align*}
       Then there exists a constant $C'$ only depending on on $d,k$ and $C$ such that
       \begin{align}
          \int(\prod_{j=1}^{k+1}\sum_{y^j\in A_j}\chi_{T_{y_j}^{\delta}}(x))^{\frac{1}{k}}dx\leq (C'\rho^{-1})^{2\log\log(1/\delta)}\delta^d(\prod_{j=1}^{k+1}\#A_j)^{\frac{1}{k}}.
      \end{align}
      One can choose $C'=C_{d,k}(10+C)^{2d}$ for some constant $C_{d,k}$ depending on $d,k$.
   \end{thm}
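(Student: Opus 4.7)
The plan is to prove Theorem \ref{multilinear curved Kakeya} by an iteration on scales, following Bennett's argument in Section 5 of \cite{Ben14}. The key observation is that at the scale $\sqrt r$, a curved $r$-tube looks essentially straight: Taylor expansion together with $\|P_y\|_{C^2}\le C$ forces $\gamma_y$ to deviate from its tangent line by $O(Cr)$ over any subinterval of length $\sqrt r$, which is comparable to the tube thickness. This suggests a scale-squaring recursion that reduces the curved estimate at scale $r$ to the curved estimate at scale $\sim\sqrt r$ using the multilinear (straight) Kakeya theorem of \cite{CV13} as a black box, and iterating $\sim\log\log(1/\delta)$ times converts the trivial bound at scale $O(1)$ into the target bound at scale $\delta$.

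Let $K(r)$ be the smallest constant for which
\begin{align*}
\int\Big(\prod_{j=1}^{k+1}\sum_{y\in A_j}\chi_{T_y^r}(x)\Big)^{1/k}dx\le K(r)\cdot r^d\prod_{j=1}^{k+1}(\#A_j)^{1/k}
\end{align*}
holds for every admissible configuration. For the scale-squaring step, I partition $[-1,1]^d$ into cubes $\{Q_\alpha\}$ of side $\sqrt r$. Inside $Q_\alpha$, the $C^2$ bound implies that $T_y^r\cap Q_\alpha$ is contained in a straight tube $\tilde T_y^\alpha$ of radius $(1+C)r$ aligned with the tangent direction $e_y(c_\alpha)$ at the $x_d$-center $c_\alpha$ of $Q_\alpha$. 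Rescaling $Q_\alpha$ to the unit cube turns these into straight $\sqrt r$-tubes whose directions satisfy the hypothesis $|e_{y^1}(c_\alpha)\wedge\cdots\wedge e_{y^{k+1}}(c_\alpha)|\ge\rho^k$. Applying the multilinear Kakeya theorem of \cite{CV13}, absorbing the wedge factor into a $\rho^{-1}$ loss, and rescaling back gives
\begin{align*}
\int_{Q_\alpha}\Big(\prod_j\sum_{y\in A_j}\chi_{T_y^r}\Big)^{1/k}dx\le C_{d,k}(1+C)^d\rho^{-1}\cdot r^d\prod_j(\#A_j^\alpha)^{1/k},
\end{align*}
where $A_j^\alpha=\{y\in A_j:T_y^r\cap Q_\alpha\ne\emptyset\}$.

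The decisive step is the summation over $\alpha$: if $T_y^r\cap Q_\alpha\ne\emptyset$ then $Q_\alpha\subset T_y^{C_*\sqrt r}$ for a constant $C_*=C_*(C)$, so $\#A_j^\alpha$ equals the constant value of $\sum_{y\in A_j}\chi_{T_y^{C_*\sqrt r}}$ on $Q_\alpha$. Averaging gives $\sum_\alpha\prod_j(\#A_j^\alpha)^{1/k}\le r^{-d/2}\int\big(\prod_j\sum_{y\in A_j}\chi_{T_y^{C_*\sqrt r}}\big)^{1/k}dx$, which by definition of $K(C_*\sqrt r)$ is at most $C_*^d K(C_*\sqrt r)\prod_j(\#A_j)^{1/k}$. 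Combining yields the recursion
\begin{align*}
K(r)\le C_0\rho^{-1}K(C_*\sqrt r),\qquad C_0\le C_{d,k}(10+C)^{2d}.
\end{align*}
Setting $r_0=\delta$ and $r_{j+1}=C_*\sqrt{r_j}$, one computes $r_j\sim C_*^2\delta^{2^{-j}}$, so after $N\le 2\log\log(1/\delta)$ steps the scale is bounded below by an absolute constant where $K$ is trivially $O(1)$. The accumulated factor $(C_0\rho^{-1})^N\le (C'\rho^{-1})^{2\log\log(1/\delta)}$ matches the claim.

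The main obstacle is engineering the scale-squaring recursion cleanly — in particular, verifying the per-cube comparison between curved tubes $T_y^r\cap Q_\alpha$ and straight tubes $\tilde T_y^\alpha$ uniformly in the configuration, and organizing the combinatorial sum $\sum_\alpha\prod_j(\#A_j^\alpha)^{1/k}$ so that it reappears exactly as a multilinear curved Kakeya quantity at the larger scale $C_*\sqrt r$. Once this is set up, the multilinear straight Kakeya theorem and the elementary iteration do the rest, and the $\log\log$-loss is the unavoidable price for covering the whole range from $O(1)$ down to $\delta$ by square-root steps.
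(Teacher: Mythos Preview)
Your proposal is correct and follows essentially the same approach as the paper: both decompose $[-1,1]^d$ into $\sqrt r$-cubes, use the $C^2$ bound and Taylor expansion to replace curved tubes by straight tubes inside each cube, apply the Carbery--Valdimarsson multilinear Kakeya theorem at the rescaled scale, and then recognize the sum over cubes as the curved Kakeya quantity at scale $\sim\sqrt r$ to obtain the recursion $K(r)\lesssim\rho^{-1}K(C_*\sqrt r)$, iterated $O(\log\log(1/\delta))$ times. The constants and the organization match; there is no substantive difference.
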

   
   \begin{proof}
       Our idea is that, by Taylor's formula, in a $\delta^{1/2}$-cube, a curved $\delta$-tube is approximately a straight $\delta$-tube. Therefore, we can apply the multilinear Kakeya estimate (\ref{multilinear Kakeya}) to obtain a recursive formula and iterate.

       Let $C_{Curved}(\delta,\rho)$ denote the smallest constant $C$ such that
       \begin{align*}
          \int(\prod_{j=1}^{k+1}\sum_{y^j\in A_j}\chi_{T_{y_j}^{\delta}}(x))^{\frac{1}{k}}dx\leq C \delta^d(\prod_{j=1}^{k+1}\#A_j)^{\frac{1}{k}}.
      \end{align*}
      We decompose $[-1,1]^d$ into $2\delta^{\frac{1}{2}}$-cubes, say $\{Q\}$. Fix a cube $Q$, after translation we can assume that $Q=[-\delta^{1/2},\delta^{1/2}]^d$. By Taylor's formula, we have
      \begin{align*}
          Q\cap T_y^{\delta}\subset Q\cap\mathcal{N}_{(1+C)\delta}(\gamma_y(0)+\Span\{e_y(0)\}):=T'_y.
      \end{align*}
      $T'_y$ is approximately a tube of size $ (1+C)\delta\times\cdots\times (1+C)\delta\times\delta^{1/2}$.
      Let $\mathcal{T}_j=\{T'_{y^j}:y^j\in A_j\}$, and let $A_j^Q=\{y^j\in A_j:T_{y_j}\cap Q\neq\emptyset\}$. Then after rescaling by a factor $\delta^{-1/2}$, we can apply (\ref{multilinear Kakeya}) at scale $\delta^{1/2}$ and get
      \begin{align*}
          \int_Q(\prod_{j=1}^{k+1}\sum_{y^j\in A_j}\chi_{T_{y_j}^{\delta}}(x))^{\frac{1}{k}}dx\leq \int_Q(\prod_{j=1}^{k+1}\sum_{T_j\in \mathcal{T}_j}\chi_{T_{j}}(x))^{\frac{1}{k}}dx\lesssim C_{d,k} (1+C)^d\rho^{-1}\delta^d(\prod_{j=1}^{k+1}\#A_j^Q)^{\frac{1}{k}}.
      \end{align*}
      Then 
      \begin{align}\label{first step, MCK}
          \int(\prod_{j=1}^{k+1}\sum_{y^j\in A_j}\chi_{T_{y_j}^{\delta}}(x))^{\frac{1}{k}}dx\leq C_{d,k} (1+C)^d\rho^{-1}\delta^d\sum_{Q}(\prod_{j=1}^{k+1}\#A_j^Q)^{\frac{1}{k}}.
      \end{align}
      
      Now let
      \begin{align*}
          T_y^{\delta^{1/2}}=\{x=(x',x_d)\in[-1,1]^d:|x'-P_y(x_d)|\leq (10+C)\delta^{1/2}\}.
      \end{align*}
      By construction, 
      \begin{align*}
          T_y^{\delta}\cap Q\neq\emptyset \implies Q\subset T_y^{\delta^{1/2}}.
      \end{align*}
      Then 
      \begin{align*}
          \#A_j^Q\leq\sum_{y^j\in A_j}\chi_{T_{y^j}^{\delta^{1/2}}}(x),\;x\in Q.
      \end{align*}
      Take the product and then integrate over $Q$, we get 
      \begin{align*}
          (\prod_{j=1}^{k+1}\#A_j^Q)^{\frac{1}{k}}\leq 2^d\delta^{-d/2}\int_{Q}(\prod_{j=1}^{k+1}\sum_{y^j\in A_j}\chi_{T_{y^j}^{\delta^{1/2}}}(x))^{\frac{1}{k}}.
      \end{align*}
      Sum over all $Q$, we get
      \begin{align}\label{second step, MCK}
          \sum_{Q}(\prod_{j=1}^{k+1}\#A_j^Q)^{\frac{1}{k}}\leq 2^d\delta^{-d/2}\int(\prod_{j=1}^{k+1}\sum_{y^j\in A_j}\chi_{T_{y^j}^{\delta^{1/2}}}(x))^{\frac{1}{k}}dx.
      \end{align}
      By definition of $C_{Curved}$, we have 
      \begin{align}\label{inductive hypothesis, MCK}
          \int(\prod_{j=1}^{k+1}\sum_{y^j\in A_j}\chi_{T_{y^j}^{\delta^{1/2}}}(x))^{\frac{1}{k}}dx\leq C_{Curved}((10+C)\delta^{1/2},\rho)(10+C)^d\delta^{d/2}(\prod_{j=1}^{k+1}\#A_j)^{\frac{1}{k}}.
      \end{align}
      By (\ref{first step, MCK}), (\ref{second step, MCK}) and (\ref{inductive hypothesis, MCK}), we get
      \begin{align*}
           \int(\prod_{j=1}^{k+1}\sum_{y^j\in A_j}\chi_{T_{y_j}^{\delta}}(x))^{\frac{1}{k}}dx\leq 2^{-d}C_{d,k}(10+C)^{2d}\rho^{-1}C_{Curved}((10+C)\delta^{1/2},\rho)\delta^d(\prod_{j=1}^{k+1}\#A_j)^{\frac{1}{k}}.
      \end{align*}
      By definition of $C_{Curved}$ again, we get a recursive formula
      \begin{align}
          C_{Curved}(\delta,\rho)\leq C'\rho^{-1}C_{Curved}((10+C)\delta^{1/2},\rho)
      \end{align}
       for some constant $C'$ only depends on $d,k$ and $C$. 
      Iterate at most $2\log\log(1/\delta)$ times (recall that $\delta<e^{-2}$),  we have
      \begin{align}
          C_{Curved}(\delta,\rho)\leq (C'\rho^{-1})^{2\log\log(1/\delta)}.
      \end{align}
   \end{proof}
   \subsection{Proof of Theorem \ref{curved Kakeya, discretized}}
   Now we can prove Theorem \ref{curved Kakeya, discretized} by the Bourgain-Guth argument and induction on scales. Note that if $A$ is a $(\delta,2(k-1)+\beta)$, the transversal condition (\ref{transversality}) implies that
   \begin{align*}
       \{(P_y(c),e_y(c)):y\in A\}
   \end{align*}
   is a $(m\delta, 2(k-1)+\beta)$-set for all $c\in [-1,1]$. This non-concentration condition is easier to handle in the process of induction on scales. It suffices to show the following.
   \begin{thm}\label{version for induction}
       Suppose $A\subset[-1,1]^{d-1}\times[-1,1]^{d-1}$. Let $\{\gamma_y\}_{y\in A}$ be a collection of smooth curves such that $\gamma_y$ joints $(y_1,-1)$ and $(y_2,1)$ and can be parametrized by its last coordinate
        \begin{align*}
            \gamma_y(c)=(P_y(c),c),\;c\in[-1,1].
        \end{align*}
       We assume that $\{\gamma_y\}$ satisfies the followings.
       
       1. (\textbf{Regular condition}) 
       
       There exists $C\geq 1$ such that 
       \begin{align}
           \sup_{y\in A}\|P_{y}(\cdot)\|_{C^2([-1,1])}\leq C.
       \end{align}

        2. (\textbf{Non-concentration condition})
        
        The set 
        \begin{align}\label{non-concentration condition}
            \{(P_y(c),e_y(c))\in\R^{d-1}\times S^{d-1}:y\in A\}
        \end{align}
        is a $(\delta,2(k-1)+\beta)$-set for all $c\in [-1,1]$.
        
        Let
        \begin{align*}
         T_y^{\delta}=\{x=(x',x_d)\in[-1,1]^d:|x'-P_y(x_d)|\leq \delta\}.
         \end{align*}
         Then for any $\epsilon>0$, we have
    \begin{align}
        \|\sum_{y\in A}\chi_{T_y^{\delta}}\|_{L^p}\lesssim_{d,\epsilon,C}\delta^{\frac{1-k}{p'}-\epsilon}\cdot (\sum_{y\in A}|T_y^{\delta}|)^{1/p},\;p'=k+\beta.
    \end{align}
   \end{thm}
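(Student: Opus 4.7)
The plan is to run a Bourgain--Guth style argument on scales, using Theorem \ref{multilinear curved Kakeya} at each step. Fix $\epsilon>0$ and define $K(\delta)$ to be the smallest constant for which
\begin{align*}
\|\sum_{y\in A}\chi_{T_y^\delta}\|_{L^p}\le K(\delta)\left(\sum_{y\in A}|T_y^\delta|\right)^{1/p}\delta^{(1-k)/p'}
\end{align*}
holds for every collection of curves satisfying the regular and non-concentration hypotheses at scale $\delta$. The goal is $K(\delta)\le C_\epsilon\delta^{-\epsilon}$; this is proved by strong induction on $\log(1/\delta)$, deriving a recursive inequality that, when iterated $O(1/\epsilon)$ times with scale-parameter $R=\delta^{-\eta}$ (with $\eta\ll\epsilon$), yields the claim.

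For the induction step, partition $S^{d-1}$ into spherical caps $\{\tau\}$ of radius $R^{-1}$, and split $f(x):=\sum_{y\in A}\chi_{T_y^\delta}(x)$ as $\sum_\tau f_\tau(x)$, where $f_\tau$ sums over those $y$ with $e_y(x_d)\in\tau$. The standard Bourgain--Guth dichotomy holds at each $x$: either (A) there exist $k+1$ heavy caps $\tau_1,\ldots,\tau_{k+1}$ whose direction vectors are $R^{-Ck}$-transverse in the sense required by Theorem \ref{multilinear curved Kakeya}, or (B) all heavy caps are confined to the $R^{-1}$-neighborhood of a single $k$-dimensional subspace.

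Case (A) is handled by the multilinear estimate. Where (A) holds, the pointwise inequality $f(x)^{(k+1)/k}\lesssim R^C\prod_{j=1}^{k+1}f_{\tau_j}(x)^{1/k}$ combined with Theorem \ref{multilinear curved Kakeya} (applied with transversality $\rho\sim R^{-C}$) bounds the $L^{(k+1)/k}$ contribution by $R^{C'}(\log\log\delta^{-1})^{C''}\delta^d(\#A)^{(k+1)/k}$. Interpolating this $L^{(k+1)/k}$ bound against the trivial $L^1$ bound $\|f\|_1=\sum_y|T_y^\delta|$ produces the desired $L^p$ estimate at $p'=k+\beta\le k+1$, with an acceptable polynomial loss in $R$.

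Case (B) is the inductive step and the main source of difficulty. When the heavy directions concentrate near a $k$-dimensional subspace $V$, the transversal condition (\ref{transversality}) and the regular condition together force the contributing curves into a thickened smooth $k$-dimensional surface of thickness $\sim R^{-1}$; the concentrated region is then covered by $\lesssim R^{d-k}$ such curved slabs. Within each slab, the non-concentration $(\delta,2(k-1)+\beta)$-set hypothesis persists with the right effective dimension (the slab essentially fixes one transverse direction), and rescaling the slab to unit size converts the problem at scale $\delta$ into one at scale $R\delta$. The induction hypothesis supplies a bound involving $K(R\delta)$ in each slab; summing over slabs produces the second term of the recursion. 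Optimizing $\eta$ and iterating $O(1/\eta)$ times gives $K(\delta)\le C_\epsilon\delta^{-\epsilon}$, with the $\log\log\delta^{-1}$ losses from Theorem \ref{multilinear curved Kakeya} absorbed by the $R^{C'}$ factors. The principal obstacle is making case (B) rigorous: one must verify that after rescaling each slab to unit scale, the regular condition, the transversal condition, and the non-concentration hypothesis all survive with uniformly controlled constants independent of the concentrating plane $V$, and that the Frostman dimension of the rescaled parameter set matches what the induction requires.
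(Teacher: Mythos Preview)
Your overall framework---Bourgain--Guth dichotomy plus induction on scales, with Theorem~\ref{multilinear curved Kakeya} handling the broad part---matches the paper. However, there are two concrete gaps in the execution.

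First, in Case~(A) your interpolation goes the wrong way. Since $p'=k+\beta\in[k,k+1]$, one has $p=p'/(p'-1)\in[(k+1)/k,\,k/(k-1)]$, so $p\ge(k+1)/k$. Interpolating the multilinear $L^{(k+1)/k}$ bound against $L^1$ only reaches exponents \emph{below} $(k+1)/k$. To reach $p$ you must instead use the trivial $L^\infty$ bound $\|f\|_\infty\le\#A$: pull out a factor $(\#A)^{p-(k+1)/k}$ pointwise before applying Theorem~\ref{multilinear curved Kakeya}, obtaining $\|f\|_{L^p(X_b)}^p\lesssim\delta^d(\#A)^p$, and then invoke the non-concentration bound $\#A\le\delta^{-(2(k-1)+\beta)}$. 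This is what the paper does, and the exponents close exactly.

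Second, your Case~(B) is not set up correctly. You invoke ``the transversal condition~(\ref{transversality}),'' but that is \emph{not} a hypothesis of Theorem~\ref{version for induction}; only the regular and non-concentration conditions are available here. More seriously, concentration of the \emph{directions} $e_y(x_d)$ near a $k$-plane $V$ does not force the \emph{curves} into any thin $k$-dimensional surface in $[-1,1]^d$, so the ``$\lesssim R^{d-k}$ curved slabs'' picture has no foundation and the resulting count does not close the recursion. The paper's narrow case works entirely in direction space: the $\rho$-neighborhood of $V\cap S^{d-1}$ is covered by $\lesssim\rho^{1-k}$ caps $\tilde\tau$ of radius $\rho$, and H\"older's inequality over these caps yields the crucial gain $\rho^{(1-k)(p-1)}$. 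Within each cap one localizes to $\rho^2$-tubes, performs an anisotropic rescaling (by $\rho^{-2}$ transversally and $\rho^{-1}$ longitudinally) that preserves both the regular constant $C$ and the non-concentration condition at the new scale, and applies the induction hypothesis. Since $(p-1)=p/p'$, the H\"older gain $\rho^{(1-k)(p-1)}$ exactly cancels the $\rho^{-(1-k)p/p'}$ loss coming from the rescaled $\delta^{(1-k)/p'}$ factor, which is why the recursion closes with only a $\rho^{p\epsilon}$ surplus. Your proposal does not identify this mechanism, and the alternative slab count you suggest would not produce the matching exponent.
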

   \begin{proof}
       Fix $\epsilon>0$, suppose that for some $\delta_0>0$ and $C_0>0$, we have 
   \begin{align}\label{simplified inductive hypothesis}
        \|\sum_{y\in A}\chi_{T_y^{\tilde{\delta}}}\|_{L^p}\leq C_0\tilde{\delta}^{\frac{1-k}{p'}-\epsilon}\cdot (\sum_{y\in A}|T_y^{\tilde{\delta}}|)^{1/p},\;\tilde{\delta}\geq \delta_0.
    \end{align}
    We need to choose a small enough $\delta_0$ and a large enough $C_0$ to make the induction closed.
    At the end, we shall explain how to choose  $C_0$ and $\delta_0$.
    
    Suppose $\delta<\delta_0$, and (\ref{simplified inductive hypothesis}) holds for $\tilde{\delta} \geq2\delta$. Let $\rho$ be a small constant determined later. We cover $S^{d-1}$ by boundedly overlapping $\rho^{d}/1000d$-caps $\{\tau\}$. The number of such caps is $t\rho^{-d(d-1)}$ for some constant $t$ only depending on $d$.

    Let 
    \begin{align*}
        X=\bigcup_{y\in A}T_{y}^{\delta}.
    \end{align*}
    Given $\tau$ and $x\in X$, we define 
    \begin{align*}
        A_{\tau}(x)=\{y\in A:e_y(x_d)\in\tau\}.
    \end{align*}
    Then we define the significant set
    \begin{align*}
        S'(x)=\{\tau:\sum_{y\in A_{\tau}(x)}\chi_{T_y^{\delta}}(x)\geq \frac{1}{1000\#\{\tau\}}\sum_{y\in A}\chi_{T_y^{\delta}}(x)\}
    \end{align*}
    After a dyadic pigeonholing, we can choose $S(x)\subset S'(x)$ such that
    \begin{align*}
            \sum_{y\in A_{\tau}(x)}\chi_{T_y^{\delta}}(x)/2\leq\sum_{y\in A_{\tau'}(x)}\chi_{T_y^{\delta}}(x)\leq 2\sum_{y\in A_{\tau}(x)}\chi_{T_y^{\delta}}(x),\;\forall\tau,\tau'\in S(x)
        \end{align*}
        and 
        \begin{align*}
            \log(\rho^{-1})\cdot\sum_{\tau\in S(x)}\sum_{y\in A_{\tau}(x)}\chi_{T_y^{\delta}}(x)\gtrsim_d \sum_{y\in A}\chi_{T_y^{\delta}}(x).
        \end{align*}
        
       We want to apply the Bourgain-Guth argument to the set $S(x)$. Roughly speaking, the Bourgain-Guth argument says that given $k$, either most of the caps in $S(x)$ are contained in a small neighborhood of a $k$-dimensional subspace, or most $(k+1)$-tuples are well separated. This idea first appeared in \cite{BG11}.

        To make this dichotomy precise, we need some definitions. 
        For $u_1,\dots,u_{k+1}\in S^{d-1}$, we define $|u_1\wedge u_2\wedge\cdots\wedge u_{k+1}|$ to be the volume of the parallelepiped spanned by $u_1,\dots,u_{k+1}$. For $u\in S^{d-1}$, $H\in G(d,k)$, we define $|H\wedge u|=|e_1\wedge\cdots\wedge e_{k}\wedge u|$ if $e_1,\dots,e_k$ is an orthonormal basis for $H$. Clearly, this definition is independent of the choices of orthonormal basis. We have the following.
   \begin{lem}\label{simplified Bourgain-Guth}
       Suppose $S$ is a collection of boundedly overlapping $\rho^{d}/1000d$-caps on $S^{d-1}$, then at least one of the followingholds.
    
    I. $\exists \tau_1,\dots,\tau_{k+1}\in S$ such that  
       \begin{align*}
           |u_1\wedge\cdots\wedge u_{k+1}|\geq\rho^k/2,\;\forall u_j\in \tau_j,\;1\leq j\leq k+1.
       \end{align*}
       
    II. $\exists H\in G(d,k)$ such that 
       \begin{align*}
           \#\{\tau\in S:\tau\subset\mathcal{N}_{\rho}(H)\}\geq \frac{1}{2}\#S.
       \end{align*}
   \end{lem}
   \begin{proof}
       Suppose $II$ fails.
       Let $e(\tau)\in S^{d-1}$ denote the center of $\tau$, it suffices to find $\tau_1,\cdots,\tau_{k+1}\in S$ such that
       \begin{align}
           |e(\tau_1)\wedge\cdots\wedge e(\tau_{k+1})|\geq \frac{999}{1000}\rho^{k}.
       \end{align}
       Choose any $\tau_1\in S$, let $H_1=span\{e(\tau_1)\}\in G(d,1)$. Since $B$ fails, we have
       \begin{align*}
           \#\{\tau\in S:\tau\subset\mathcal{N}_{\rho}(H_1)\}\leq\frac{1}{2}\#S.
       \end{align*}
       We can find $\tau_2\in S$ such that
       \begin{align*}
           |e(\tau_1)\wedge e(\tau_2)|\geq\rho-\frac{\rho^d}{1000d}\geq\frac{1000d-1}{1000d}\rho.
       \end{align*}
       
       Now let $H_2=span\{e(\tau_1),e(\tau_2)\}\in G(d,2)$. Similarly, since $II$ fails, we can find $\tau_3\in S(U)$ such that
       \begin{align*}
           |e(\tau_1)\wedge e(\tau_2)\wedge e(\tau_3)|\geq\frac{1000d-1}{1000d}\rho^2-\frac{\rho^d}{1000}\geq \frac{1000d-2}{1000d}\rho^2.
       \end{align*}
       We continue in this way. After $k$ steps, we can find $\tau_1,\cdots,\tau_{k+1}\in S(U)$ such that
       \begin{align*}
           |e(\tau_1)\wedge\cdots\wedge e(\tau_{k+1})|\geq\frac{999}{1000}\rho^{k}.
       \end{align*}
   \end{proof}
   
   We apply this lemma to $S=S(x)$, and say that $x$ is broad if case $I$ holds, and $x$ is narrow if case $I$ fails. Let $X_b$ denote the set of broad points, and $X_n$ denote the set of narrow points. We have 
   \begin{align*}
       X=X_{b}\bigcup X_{n}
   \end{align*}
   and
   \begin{align}
       \|\sum_{y\in A}\chi_{T_y^{\delta}}\|_{L^p}^p=\|\sum_{y\in A}\chi_{T_y^{\delta}}\|_{L^p(X_b)}^p+\|\sum_{y\in A}\chi_{T_y^{\delta}}\|_{L^p(X_n)}^p.
   \end{align}

    We first estimate $\|\sum_{y\in A}\chi_{T_y^{\delta}}\|_{L^p(X_b)}^p$. We define the transversal set
    \begin{align*}
        T^{k+1}=\{(\tau_1,\dots,\tau_{k+1}):|u_1\wedge\cdots\wedge u_{k+1}|\geq\rho^k/2,\;u_j\in \tau_j,\;1\leq j\leq k+1\}.
    \end{align*}
    By construction, we have
    \begin{align*}
        X_b=\bigcup_{(\tau_1,\dots,\tau_{k+1})\in T^{k+1}}\{x\in X_b:\tau_1,\dots,\tau_{k+1}\in S(x)\}.
    \end{align*}
    By pigeonholing, there exists a fixed $(k+1)$ tuples $(\tau_1,\dots,\tau_{k+1})\in T_{k+1}$ such that if 
    \begin{align*}
        X_b'=\{x\in X_b:\tau_1,\dots,\tau_{k+1}\in S(x)\},
    \end{align*}
    then 
    \begin{align}
        \|\sum_{y\in A}\chi_{T_y^{\delta}}\|_{L^p(X_b)}^p\lesssim_d\rho^{-d^3}\|\sum_{y\in A}\chi_{T_y^{\delta}}\|_{L^p(X_b')}^p.
    \end{align}
    For each $x\in X_b'$, we have 
    \begin{align}
        \sum_{y\in A}\chi_{T_y^{\delta}}(x)\lesssim_d\rho^{-d^2}\cdot\prod_{j=1}^{k+1}(\sum_{y^j\in A_{\tau_j}(x)}\chi_{T_{y^j}^{\delta}}(x))^{\frac{1}{k+1}}.
    \end{align}
    Integrate over $X_b'$, we get
    \begin{align}\label{simplified broad}
        \|\sum_{y\in A}\chi_{T_y^{\delta}}\|_{L^p(X_b)}^p\lesssim_d\rho^{-d^3}\|\sum_{y\in A}\chi_{T_y^{\delta}}\|_{L^p(X_b')}^p\lesssim_d\rho^{-10d^3}\int\prod_{j=1}^{k+1}(\sum_{y^j\in A_{\tau_j}(x)}\chi_{T_{y^j}^{\delta}}(x))^{\frac{p}{k+1}}dx.
    \end{align}
    Note that by Taylor's formula,  
     \begin{align}
         |e_y(c)-e_y(c')|\leq 2C|c-c'|,\;\forall c,c',y.
     \end{align}
    We cover $[-1,1]^{d}$ by boundedly overlapping $\frac{\rho^{d}}{1000dC}$-cubes $\{Q\}$, let $c_{Q}$ denote the last coordinate of the center of $Q$. Fix $Q$, let $A_{\tau_j}(Q)=\{y^j\in A:e_{y^j}(c_{Q})\in2\tau_j\}$, $1\leq j\leq k+1$. We have 
     \begin{align*}
         e_{y^j}(x_d)\in\tau_j,x\in Q\implies e_{y^j}(c_Q)\in 2\tau_j.
     \end{align*}
    Then
   \begin{align*}
       \int\prod_{j=1}^{k+1}(\sum_{y_j\in A_{\tau_j}(x)}\chi_{T_{y_j}^{\delta}}(x))^{\frac{p}{k+1}}dx\leq \sum_{Q}\int_{Q}(\prod_{j=1}^{k+1}\sum_{\substack{y^j\in A_{\tau_j}(Q)}}\chi_{T_{y^j}^{\delta}}(x))^{\frac{p}{k+1}}dx.
   \end{align*}
   Also by Taylor's formula and definition of $T^{k+1}$, when $|c-c_{Q}|\leq\frac{\rho^{d}}{1000dC}$,
   \begin{align}
       e_{y^j}(c_{Q})\in2\tau_j,\;1\leq j\leq k+1\implies |e_{y^1}(c)\wedge\cdots\wedge e_{y^{k+1}}(c)|\geq\rho^k/4.
   \end{align}
   Then we can apply Theorem \ref{multilinear curved Kakeya} and conclude that
   \begin{align}
       \int_{Q}(\prod_{j=1}^{k+1}\sum_{\substack{y^j\in A_{\tau_j}(Q)}}\chi_{T_{y^j}^{\delta}}(x))^{\frac{p}{k+1}}dx\leq (100C'\rho^{-1})^{2\log\log(1/\delta)}\delta^d(\#A)^{p
       },
   \end{align}
   here $C'$ is a constant only depending on on $d,k$ and $C$.
   The non-concentration condition (\ref{non-concentration condition}) implies that 
    \begin{align}
        \# A\leq \delta^{-2(k-1)-\beta}.
    \end{align}
    Recall that $\#\{Q\}\lesssim_{d,C} \rho^{-d^2}$, we sum over all $Q$ and get
   \begin{align}\label{simplified broad final}
       \|\sum_{y\in A}\chi_{T_y^{\delta}}\|_{L^p(X_b)}^p\leq\overline{C}\rho^{-100d^{3}}(100C'\rho^{-1})^{2\log\log(1/\delta)}\delta^{\frac{p(1-k)}{p'}}\sum_{y\in A}|T_y^{\delta}|,
   \end{align}
   where $\overline{C}$ is a constant independent of $\rho,\delta$.
   
    Now we turn to $\|\sum_{y\in A}\chi_{T_y^{\delta}}\|_{L^p(X_n)}^p$. For each $x\in X_n$, there exists $H=H(x)\in G(d,k)$ such that
    \begin{align}
        \sum_{y\in A}\chi_{T_y^{\delta}}(x)\lesssim_d\log(\rho^{-1})\cdot\sum_{\tau\subset\mathcal{N}_{\rho}(H)}\sum_{y\in A_{\tau}(x)}\chi_{T_y^{\delta}}(x)\lesssim\sum_{e_{y}(x_d)\in S^{d-1}\cap \mathcal{N}_{\rho}(H)}\chi_{T_y^{\delta}}(x).
    \end{align}
    We cover $S^{d-1}$ by boundedly overlapping $\rho$-caps $\{\tilde{\tau}\}$.
    Since $H$ has dimension $k$, 
   \begin{align*}
       \#\{\tilde{\tau}:\tilde{\tau}\cap\mathcal{N}_{\rho}(H)\neq\emptyset\}\lesssim_{d,k}\rho^{1-k}.
   \end{align*}
   Then by H\"older's inequality,
   \begin{align}
       (\sum_{y\in A}\chi_{T_y^{\delta}}(x))^p\lesssim_{d,k}\log(\rho^{-1})^p\cdot\rho^{(1-k)(p-1)}\cdot\sum_{\tilde{\tau}}(\sum_{y\in A_{\tilde{\tau}}(x)}\chi_{T_y^{\delta}}(x))^p,\;x\in X_n.
   \end{align}
   Integrate over $X_n$, we get
   \begin{align}\label{simplified narrow}
        \|\sum_{y\in A}\chi_{T_y^{\delta}}\|^p_{L^p(X_n)}\lesssim_{d,k}\log(\rho^{-1})^p\cdot\rho^{(1-k)(p-1)}\cdot\int\sum_{\tilde{\tau}}(\sum_{y\in A_{\tilde{\tau}}(x)}\chi_{T_y^{\delta}}(x))^pdx.
   \end{align}
    We cover $[-1,1]$ by boundedly overlapping $\frac{\rho}{1000C}$-intervals $\{J\}$, and let $c_{J}$ be the center of $J$. Then
   \begin{align}
       \int\sum_{\tilde{\tau}}(\sum_{y\in A_{\tilde{\tau}}(x)}\chi_{T_y^{\delta}}(x))^pdx\lesssim \sum_{J}\int_{S_{J}}\sum_{\tilde{\tau}}(\sum_{y\in A_{\tilde{\tau}}(x)}\chi_{T_y^{\delta}}(x))^pdx,
   \end{align}
   where 
   \begin{align*}
       S_{J}=[-1,1]^{d-1}\times J.
   \end{align*}
    Also, by Taylor's formula,
     \begin{align}
         y\in A_{\tilde{\tau}}(x),\;x\in S_{J}\implies y\in A_{\tilde{\tau}}(J),
     \end{align}
    where 
    \begin{align*}
        A_{\tilde{\tau}}(J)=\{y\in A:e_{y}(c_{J})\in2\tilde{\tau}\}.
    \end{align*}
    Then
    \begin{align}
        \sum_{J}\sum_{\tilde{\tau}}\int_{S_{J}}(\sum_{y\in A_{\tilde{\tau}}(x)}\chi_{T_y^{\delta}}(x))^pdx\leq \sum_{J}\sum_{\tilde{\tau}}\int_{S_{J}}(\sum_{y\in A_{\tilde{\tau}}(J)}\chi_{T_y^{\delta}}(x))^pdx.
    \end{align}
    Fix $J$ and $\tilde{\tau}$. Let $\mathcal{T}_{\tilde{\tau}}$ be a collection of boundedly overlapping $\rho^{2}$-tubes that point in direction $e(\tilde{\tau})$, the center of $\tilde{\tau}$. 
   Note that by Taylor's formula,
   \begin{align}
       e_y(c_{J})\in2\tilde{\tau}\implies e_y(c)\in 3\tilde{\tau},\;c\in J\implies\exists T\in \mathcal{T}_{\tilde{\tau}},\;T_{y}^{\delta} \cap S_{J}\subset T.
   \end{align}
   For each $T\in \mathcal{T}_{\tilde{\tau}}$,  we define 
   \begin{align*}
       A_T=\{y\in A:T_{y}^{\delta} \cap S_{J}\subset T,\;e_y(c_{J})\in2\tilde{\tau}\}.
   \end{align*}
   Then
   \begin{align}
       \int_{S_{J}}(\sum_{y\in A_{\tilde{\tau}}(J)}\chi_{T_y^{\delta}}(x))^pdx\lesssim \sum_{T\in\mathcal{T}_{\tilde{\tau}}}\int_{S_{J}}(\sum_{\substack{y\in A_T}}\chi_{T_{y}^{\delta}}(x))^pdx.
   \end{align}
   Thus 
   \begin{align}
       \sum_{\tilde{\tau}}\int_{S_{J}}(\sum_{y\in A_{\tilde{\tau}}(J)}\chi_{T_y^{\delta}}(x))^pdx\lesssim\sum_{\tilde{\tau}}\sum_{T\in\mathcal{T}_{\tilde{\tau}}}\int_{S_{J}}(\sum_{\substack{y\in A_T}}\chi_{T_{y}^{\delta}}(x)))^pdx.
   \end{align}
   
       Now we fix $T\in\mathcal{T}_{\tilde{\tau}}$. After translation, we assume that $T$ and $S_J$ are centered at $0$. We consider the linear change of coordinates $\psi_{\rho}$ that preserves $\R^{d-1}\times\{0\}$ and sends $e(\tilde{\tau})$ to $e_d$. We have 
       $|\psi_{\rho}(v)|\sim_C |v|,\;\forall v$ and $|\det \psi_{\rho}|\sim_C 1 $. 
       Thus, after multiplication by a constant depending on $C$, we can assume that $e(\tilde{\tau})=e_d$.

       Suppose $J=[-\frac{\rho}{2000C},\frac{\rho}{2000C}]$. We define $E_{\rho}(y)=(P_{y}(-\frac{\rho}{2000C}),P_{y}(\frac{\rho}{2000C}))$ and consider the linear map $A_{\rho}$ given by
   \begin{align*}
       e_{j}\mapsto\frac{1}{\rho^2}e_{j},\;1\leq j\leq d-1;\;e_d\mapsto \frac{2000C}{\rho}e_d.
    \end{align*}
    Then we get a new collection of smooth curves
   \begin{align*}
       \tilde{\gamma}_z=A_{\rho}(S_{J}\cap\gamma_y),\;z=\frac{1}{\rho^{2}}E_{\rho}(y),\;y\in A_T.
   \end{align*}
   By construction, if $T_{y}^{\delta} \cap S_{J}\subset T$ , $A_{\rho}(T_{y}^{\delta} \cap S_{J})$ is a $\frac{\delta}{\rho^{2}}$-tube centered at the curve $\tilde{\gamma}_z$ and 
   \begin{align*}
       A_{\rho}(T^{\delta}_{y} \cap S_{J})\subset A_\rho(T)\subset[-1,1]^d.
   \end{align*} 
   We can reparametrize $\tilde{\gamma}_z$ by its last coordinate
   \begin{align*}
       \tilde{\gamma}_z(c)=(\tilde{P}_{z}(c),c),\;c\in [-1,1] ,
   \end{align*}
   where 
   \begin{align*}
       \tilde{P}_{z}(c)=\frac{1}{\rho^{2}}P_{y}(\frac{\rho}{2000C}c),\;z=\frac{1}{\rho^{2}}E_{\rho}(y).
   \end{align*}
    
   It is clear that the regular condition holds with the same $C$ and the set
   \begin{align*}
       \{(\tilde{P}_{z}(c), \tilde{e}_{z}(c))\in\R^{d-1}\times S^{d-1}:z\in A_{\rho}(A_T)\}
   \end{align*}
   is a $(M_C\frac{\delta}{\rho},2(k-1)+\beta)$-set for some constant $M_C$ depending only on $C$ and $d$.
   By our induction hypothesis (\ref{simplified inductive hypothesis}), we have 
   \begin{align}
      \int_{S_{J}}(\sum_{\substack{y\in A_T}}\chi_{T_{y}^{\delta}}(x))^pdx\lesssim_{d,C} C_0^p\rho^{-\frac{p(1-k)}{p'}+p\epsilon}(M_C\delta)^{\frac{p(1-k)}{p'}-p\epsilon}\cdot |J|\cdot\sum_{\substack{y\in A_T}}|T_y^{\delta}|.
   \end{align}
   Thus 
  \begin{align}
       \|\sum_{y\in A}\chi_{T_y^{\delta}}\|^p_{L^p(X_n)}\lesssim_{d,k,C} C_0^p\log(\rho^{-1})^p\rho^{p\epsilon}\delta^{\frac{p(1-k)}{p'}-p\epsilon}\cdot\sum_{\tilde{\tau}}\sum_{T\in\mathcal{T}_{\tilde{\tau}}}\sum_{\substack{y\in A_T}}|T_y^{\delta}|.
   \end{align}
   Recall that if $e_y(c_{J})\in 2\tilde{\tau}$, we have
   $e_y(c)\in 3\tilde{\tau}$, $\forall|c-c_{J}|\leq\frac{\rho}{2000C}$. Then since $\{\tilde{\tau}\}$ is boundedly overlapping, we have 
   \begin{align}\label{simplified boundedly overlapping}
       \sum_{\tilde{\tau}}\sum_{T\in\mathcal{T}_{\tilde{\tau}}}\sum_{\substack{y\in A_T}}|T_y^{\delta}|\lesssim_d\sum_{y\in A}|T_y^{\delta}|.
   \end{align}
   Thus for some constant $\hat{C}$ independent with $\delta,\rho$, we have 
   \begin{align}\label{simplified narrow final}
       \|\sum_{y\in A}\chi_{T_y^{\delta}}\|^p_{L^p(X_n)}&\leq \hat{C}C_0^p\log(\rho^{-1})^p\rho^{p\epsilon}\cdot\delta^{\frac{p(1-k)}{p'}-p\epsilon}\sum_{\substack{y\in A}}|T_y^{\delta}|.
   \end{align}

   Thus by (\ref{simplified broad final}) and (\ref{simplified narrow final}), we have 
   \begin{align}\label{final}
       &\|\sum_{y\in A}\chi_{T_y^{\delta}}\|_{L^p}^p\\
       \leq& \left(\overline{C}\rho^{-100d^{3}}(100C'\rho^{-1})^{2\log\log(1/\delta)}\delta^{p\epsilon}+\hat{C}C_0^p\log(\rho^{-1})^p\rho^{p\epsilon}\right)\delta^{\frac{p(1-k)}{p'}-p\epsilon}\sum_{y\in A}|T_y^{\delta}|.\nonumber
   \end{align}
   First, we choose $\rho$ small enough such that
   \begin{align}
       \hat{C}\log(\rho^{-1})^p\rho^{p\epsilon}\leq \frac{1}{2}.
   \end{align}
   Then we take $\delta_0$ small enough such that 
   \begin{align}
       \overline{C}\rho^{-100d^{3}}(100C'\rho^{-1})^{2\log\log(1/\delta)}\delta^{p\epsilon}\leq\overline{C}\rho^{-100d^{3}}(100C'\rho^{-1})^{2\log\log(1/\delta_0)}\delta_0^{p\epsilon},\;\delta\leq\delta_0.
   \end{align}
   Note that we have a trivial bound
   \begin{align*}
       \|\sum_{y\in A}\chi_{T_y^{\delta}}\|^p_{L^p}\leq\#A^p\leq\delta^{-d}\cdot\delta^{\frac{p(1-k)}{p'}}\sum_{y\in A}|T_{y}^{\delta}|.
   \end{align*}
   Hence, it suffices to take 
   \begin{align}
       C_0^p\geq \max\{2\overline{C}\rho^{-100d^{3}}(100C'\rho^{-1})^{2\log\log(1/\delta_0)}\delta_0^{p\epsilon},\delta_0^{-d}\}.
   \end{align}
   \end{proof}

   \section{Further Remarks}
   \subsection{Sharpness of Theorem \ref{Hausdorff dimension, unions of geodesics}}
   In this section, we will give explicit examples to show that Theorem \ref{Hausdorff dimension, unions of geodesics} is sharp for complete manifolds with constant sectional curvature. 

   \begin{thm}
       Suppose $M$ is a $d$-dimensional complete manifold with constant sectional curvature, and $\pi$ is the projection from $SM$ to $M$. Given an integer $1\leq k\leq d-1$ and $\beta\in[0,1]$, there exists an invariant set $E\subset SM$ with $\dH E=2(k-1)+\beta+1$ and $\dH\pi(E)=k+\beta$.
   \end{thm}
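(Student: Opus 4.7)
The plan is to build a local version of the Euclidean sharpness example -- the set of all lines contained in a $\beta$-dimensional union of parallel affine $k$-planes -- and port it to the other space forms. Since Hausdorff dimension on $SM$ is a local invariant by (\ref{Hausdorff dimension on manifolds. local}), and $M$ is locally isometric to one of $\mathbb{R}^d$, $\mathbb{H}^d$, or $S^d$ (after scaling) in a normal coordinate neighborhood of any point $p \in M$, it suffices to exhibit $E$ inside a small geodesic ball around $p$ in the appropriate model space.

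I would proceed as follows. First, fix a $(k+1)$-dimensional totally geodesic submanifold $W$ through $p$ and a geodesic $\eta \subset W$ through $p$. Next, foliate a neighborhood of $p$ in $W$ by totally geodesic $k$-submanifolds $\{N_t\}_{t \in I}$ orthogonal to $\eta$ at $\eta(t)$: parallel affine hyperplanes in $\mathbb{R}^{k+1}$; nested hemispheres in the upper half-space model of $\mathbb{H}^{k+1}$; great $k$-spheres orthogonal to $\eta$ in $S^{k+1}$, localized to a ball small enough to avoid their common polar $S^{k-1}$. Then pick a compact $F \subset I$ with $\dim_\H F = \beta$ and $\mathcal{H}^\beta(F) > 0$, set $U_F := \bigcup_{t \in F} N_t$, and define
\[ E := \{(x, v) \in SM : x \in U_F,\; v \in T_x N_{t(x)}\}, \]
where $t(x)$ is the unique $t \in F$ with $x \in N_t$.

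Invariance of $E$ under the geodesic flow is then immediate from the totally geodesic property of each $N_t$, and clearly $\pi(E) = U_F$. The normal flow $\phi_t : N_0 \to N_t$ provides a bi-Lipschitz parametrization $(t, x_0, \omega) \mapsto (\phi_t(x_0), D\phi_t(x_0)\omega)$ identifying $E$ locally with $F \times (N_0 \cap B) \times S^{k-1}$ and $\pi(E)$ with $F \times (N_0 \cap B)$. Standard product formulas for Hausdorff dimension, together with the Lipschitz lower bound used in the proof of Proposition \ref{Euclidean case}, then yield $\dim_\H \pi(E) = \beta + k$ and $\dim_\H E = \beta + k + (k-1) = 2(k-1) + \beta + 1$, as required.

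I expect the main wrinkle to be the spherical case, where the great $k$-spheres orthogonal to a given great circle all share a common polar $S^{k-1}$ and so fail to be globally disjoint; one must work inside a ball small enough to excise this polar set before the $\{N_t\}$ become a genuine foliation. The Euclidean and hyperbolic cases admit global disjoint families without this adjustment, and in all three cases the remaining verifications are routine product-dimension calculations.
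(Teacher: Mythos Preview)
Your construction is the same one the paper uses: a $\beta$-dimensional family of totally geodesic $k$-submanifolds sitting inside a fixed totally geodesic $(k+1)$-submanifold. The paper treats $\R^d$, $\mathbb{H}^d$, $S^d$ separately and computes the dimensions globally, whereas you package all three cases into a single local picture; in particular your spherical family (great $k$-spheres in $S^{k+1}$ orthogonal to a fixed great circle, all meeting in the polar $S^{k-1}$) is exactly the paper's family $S^k_\theta$.

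There is, however, a genuine gap in your reduction step. A set contained in a small geodesic ball cannot be invariant under the geodesic flow on a complete manifold, since every geodesic eventually leaves the ball; so the sentence ``invariance of $E$ under the geodesic flow is then immediate'' directly contradicts the localization two paragraphs earlier. Locality of $\dH$ lets you \emph{compute} dimensions on a chart, but it does not let you \emph{define} a flow-invariant $E$ locally. The paper handles this by passing to the universal cover, which is \emph{globally} isometric to the model space: one builds $\tilde E$ there and pushes it down via the covering map $S\tilde M\to SM$, which is a local isometry (hence preserves $\dH$ of both $\tilde E$ and $\tilde\pi(\tilde E)$) and intertwines the geodesic flows (hence preserves invariance). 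Your argument becomes correct once you replace ``local isometry in a normal neighborhood'' by this covering reduction; alternatively you could take the flow-saturation in $M$ of your local set and check that a countable union of time-$n$ flow-translates does not raise $\dH$, but that route needs an extra line of justification.
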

   \begin{proof}
       It is well known that the universal covering of a $d$-dimensional complete manifold with constant sectional curvature is isometric to one of $\R^d$, $\mathbb{H}^d$, and $S^d$, and the covering map is a local isometry. Thus it suffices to consider $M=\R^d$, $\mathbb{H}^d$ and $S^d$.
   
   In the case $M=\R^d$, we can just take the set of all lines contained in a $\beta$-dimensional union of parallel affine $k$-planes. Similarly, in the case $M=\mathbb{H}^d$, we take the set of all geodesics contained in a $\beta$-dimensional union of parallel affine $k$-planes that are orthogonal to the boundary $\{x_d=0\}$. 

   The case $M=S^d$ is slightly different from the other two cases, since there are no parallel $k$-dimensional totally geodesic submanifolds when $k>(d-1)/2$. Our idea is choose a collection of $k$-dimensional totally submanifolds such that their overlapping is as small as possible, i.e, any two of them intersect at a fixed $(k-1)$-dimensional submanifold.

    We take
   \begin{align*}
       S^k=\{x=(x_i)_{i=1}^{d+1}\in S^d:x_{k+2}=\cdots=x_{d+1}=0\}.
   \end{align*}
   
   We take a $\beta$-dimensional set $A\subset [-\pi/6,\pi/6]$.
   For each $\theta\in A$, we define the orthogonal map $g_{\theta}\in O(d-k+1)$ such that
   \begin{align*}
       g_{\theta}(1,0,\dots,0)&=(\cos\theta,\sin\theta,0,\dots,0),\\
       g_{\theta}(0,1,\dots,0)&=(-\sin\theta,\cos\theta,0,\dots,0),\\
       g_{\theta}(0,0,x_{k+3},\dots,x_{d+1})&=(0,0,x_{k+3},\dots,x_{d+1}).
   \end{align*}
   It is easy to check that
   \begin{align*}
       \|g_{\theta}-g_{\theta'}\|_{\R^{(d-k+1)\times(d-k+1)}}\sim|\theta-\theta'|,\;\theta,\;\theta'\in [-\pi/6,\pi/6].
   \end{align*}
   Then if $O_A=\{g_{\theta}\}_{\theta\in A}$ we have $\dH O_A=\dH A=\beta$. Let $S^k_{\theta}:=(Id_{k}\oplus g_{\theta})(S^k)$. $S^k_{\theta}$ is a $k$-dimensional totally geodesic submanifold, and 
   \begin{align*}
       S^k_{\theta}\bigcap S^k_{\theta'}=S^{k-1},\;\forall \theta\neq \theta'.
   \end{align*}

   We take 
   \begin{align*}
       E=\bigcup_{\theta\in A}S(S^k_{\theta})\subset SM.
   \end{align*}
   $E$ is invariant. By construction,
   \begin{align*}
       \pi(E)=\bigcup_{\theta\in A}S^k_{\theta}.
   \end{align*}
   
   It remains to show that
   \begin{align}
       \dH E=2(k-1)+\beta+1
   \end{align}
   and 
   \begin{align}
       \dH\pi(E)=k+\beta.
   \end{align}
   
   We first compute the Hausdorff dimension of $\pi(E)$. Note that the map
   \begin{align*}
       \phi:(x,g)\in S^d\times O(d-k+1)\mapsto (Id_k\oplus g)(x)\in S^d.
   \end{align*}
   is Lipschitz, then
   \begin{align*}
       \dH\pi(E)=\dH\phi(S^{k}\times O_A)\leq \dH(S^{k}\times O_A)= k+\beta.
   \end{align*}
   For the inverse, we choose a point $p\in S^{k-1}= \bigcap_{e\in A}S^k_e$ and consider the exponential map 
   \begin{align*}
    \exp_p:B(0,\delta)\subset T_pS^d\to SB_{\delta}(p).
   \end{align*}
   The exponential map $\exp_p$ is a diffeomorphism when $\delta<\pi$. Note  $S_{\theta}^k$ are totally geodesic, we have 
   \begin{align*}
       \exp_p(t\cdot\bigcup_{e\in A }S_p(S^k_{\theta}))\subset \pi(E),\;\forall t\in \R.
   \end{align*}
   It is not hard to check that
   \begin{align*}
       \dH (\bigcup_{e\in A }S_p(S^k_{\theta}))=k-1+\beta.
   \end{align*}
    Then
   \begin{align*}
       \dH(\{tv:t\in(0,\delta),\;v\in\bigcup_{e\in A }S_p(S^k_{\theta})\})\geq \dH (\bigcup_{e\in A }S_p(S^k_{\theta}))+1=k+\beta.
   \end{align*}
   Thus 
   \begin{align*}
       \dH\pi(E)\geq k+\beta.
   \end{align*}

    Now we turn to the Hausdorff dimension of $E$. Note that the projection $\pi|_E$ is Lipschitz and for $p\in\pi(E)$, by construction of $E$,
    \begin{align*}
        \dH\pi|_E^{-1}(\{p\})\geq k-1.
    \end{align*}
    By Lemma in \cite{KM75}, we have 
    \begin{align*}
        \dH E\geq k-1+\dH \pi(E)=2(k-1)+\beta+1.
    \end{align*}
    On the other hand, we consider the map
    \begin{align*}
        \psi:S(S^k)\times A\to E,\;\psi((p,v),\theta)=(g_\theta (p), g_\theta (v)).
    \end{align*}
    It is clear that $\psi$ is Lipschitz and then 
    \begin{align*}
        \dH E\leq \dH (S(S^k)\times A)=2(k-1)+\beta+1.
    \end{align*}
    \end{proof}
    \subsection{Conjecture for manifolds with non-constant sectional curvature}
    When $M$ is a complete manifold with non-constant sectional curvature and dimension $\geq 3$, it is natural to ask whether Theorem \ref{Hausdorff dimension, unions of geodesics} is still sharp. We believe that Theorem \ref{Hausdorff dimension, unions of geodesics} can be improved in this case and give the following conjecture.
    \begin{conj}\label{Hausdorff dimension, unions of geodesics, non-constant sectional curvature}
        Suppose $d\geq 3$ and
        $M$ is a $d$-dimensional complete Riemannian manifold with \textbf{non-constant sectional curvature}. Let $1\leq k\leq d-1$ be an integer and $\beta\in[0,1]$. Then there exists $\epsilon=\epsilon(d,k,\beta, M)>0$ such that if $E\subset SM$ is an invariant set with $\dH E\geq 2(k-1)+1+\beta$, then 
        \begin{align*}
            \dH\pi(E)\geq k+\beta+\epsilon.
        \end{align*}
    \end{conj}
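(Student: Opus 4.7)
The plan is to refine the Bourgain--Guth induction in the proof of Theorem \ref{version for induction} by exploiting the obstruction to totally geodesic behavior imposed by non-constant sectional curvature. The sharpness examples of Section 5.1 realize the bound $k+\beta$ by packing geodesics into a $\beta$-dimensional family of totally geodesic $k$-submanifolds. By a classical theorem of E.~Cartan, for $d\geq 3$ and $2\leq k\leq d-1$, a Riemannian $d$-manifold admitting a totally geodesic $k$-submanifold through every point tangent to every $k$-plane has constant sectional curvature. Hence under our hypothesis there exist a point $p_0\in M$, neighborhoods $U_0\ni p_0$ and $\mathcal{H}_0\subset G(TM,k)$ of some $H_0\in G(T_{p_0}M,k)$, and positive constants $c_M,r_M$ such that for every $(p,H)\in U_0\times \mathcal{H}_0$, the second fundamental form of $\exp_p(H\cap B(0,r_M))$ at $p$ has norm at least $c_M$.

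The first ingredient is a quantitative Jacobi field estimate: for such $(p,H)$ and a unit-speed geodesic $\gamma$ with $\gamma(0)=p$ and $\gamma'(0)\in H\cap S^{d-1}$, one has $\operatorname{dist}(\gamma(t),\exp_p H)\geq c'_M t^2$ for $|t|\leq r'_M$ and for a set of initial directions of positive $(k-1)$-dimensional Hausdorff measure in $H\cap S^{d-1}$. This is simply the Taylor expansion of $\exp_p$ along $H$, whose second-order term records exactly the second fundamental form of $\exp_p H$; non-vanishing of that form produces a quantitative spreading of $\gamma$ off $\exp_p H$.

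With this in hand, one modifies the narrow case of Theorem \ref{version for induction}. There, $\{e_y(c_J):y\in A_{\tilde\tau}(J)\}$ lies in $\mathcal{N}_\rho(H)\cap S^{d-1}$ for some $H\in G(d,k)$, and for straight lines each $T_y^\delta\cap S_J$ is then contained in a single $\rho^2$-tube in direction $e(\tilde\tau)$, which is what enters (\ref{simplified narrow final}). The Jacobi estimate instead forces these curves to exit the $\rho$-neighborhood of $\exp_p H$ at rate $\sim t^2$, so the tubes $T_y^\delta\cap S_J$ are distributed among at least $\rho^{-\eta}$ distinct $\rho^2$-tubes, for some $\eta=\eta(d,k,c_M)>0$. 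Substituting this improved packing into (\ref{simplified narrow final}) gains a factor $\rho^\eta$ in the narrow estimate; iterating the Bourgain--Guth scheme as in the proof of Theorem \ref{version for induction} then yields the desired improvement $\epsilon=\epsilon(d,k,\beta,M)>0$ for $k\geq 2$.

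The main obstacle will be preserving the curvature lower bound across the induction on scales: the rescaling maps $A_\rho$ push the sub-problem to scale $\delta/\rho^2$, where the ambient curvature effect is suppressed. The proposed fix is a two-scale argument, in which the curvature-improved narrow estimate is applied only at a single macroscopic scale $\delta_0=\delta_0(c_M,r_M)$, while Theorem \ref{curved Kakeya, discretized} is used unchanged at sub-scales, so that the gain enters additively and survives iteration. A secondary difficulty is the case $k=1$, for which every geodesic is trivially a totally geodesic $1$-submanifold and Cartan's obstruction is vacuous; here one must instead appeal to the dynamics of the geodesic flow in the spirit of Ledrappier--Lindenstrauss \cite{LL03}, exploiting expansion rates along unstable leaves rather than a second fundamental form argument. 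Finally, a pigeonholing step is required to localize $A$ to a sub-family supported in $U_0\times \mathcal{H}_0$ without losing Hausdorff dimension, which follows from a countable decomposition of $E$ and a standard subset-of-full-dimension argument.
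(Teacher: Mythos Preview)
The statement is posed in the paper as a conjecture and is \emph{not} proved there; the paper offers no argument beyond the observation that the sharpness constructions of Section~5.1 all require constant sectional curvature. There is therefore no paper proof to compare against, and your proposal must be assessed as an attempted proof of an open problem.

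The strategy---use Cartan's obstruction to totally geodesic $k$-submanifolds to gain in the narrow case of the Bourgain--Guth dichotomy---is natural, but two of the obstacles you yourself name are, as written, fatal rather than technical. First, the localization step is not a pigeonholing issue. Cartan's theorem yields a \emph{single} pair $(p_0,H_0)$ at which no totally geodesic $k$-submanifold is tangent; it does not give a uniform lower bound on second fundamental forms for all $(p,H)$, and a manifold with globally non-constant curvature may well contain open sets on which the curvature is constant and every $(p,H)$ admits a tangent totally geodesic $k$-submanifold. The $k$-plane $H(x)$ appearing in the narrow case is dictated by how the directions $\{e_y(x_d):y\in A\}$ happen to cluster, not by $M$; nothing forces $H(x)$ to lie near $\mathcal{H}_0$, and a countable decomposition of $E$ localizes $E$ in $SM$ but cannot localize the concentration direction $H(x)$, which is a property of the configuration, not of $E$ alone.

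Second, the proposed two-scale fix does not produce a dimension gain. A curvature improvement applied only at a single macroscopic scale $\delta_0$ contributes a fixed factor $\rho^\eta$, independent of $\delta$; in the induction this feeds into the constant $C_0$ in (\ref{simplified inductive hypothesis}), not into the exponent of $\delta$. To reach $\dH\pi(E)\ge k+\beta+\epsilon$ the gain must compound across the $\sim\log\log(1/\delta)$ iterations, i.e., survive the rescaling $A_\rho$. But $A_\rho$ multiplies the second-order data of the curves by $\rho$, so the Jacobi spreading you invoke is suppressed at each successive scale and the gain does not iterate. This is precisely the obstruction you identify, and applying the improved estimate once does not circumvent it. The case $k=1$ is separately left without an argument.
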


\bibliographystyle{abbrv}
    \bibliography{Unions_of_geodesics}
\end{document}